\numberwithin{equation}{section}
\newcommand{\nc}{\newcommand}
\nc{\R}{\mathbb R}
\nc{\C}{\mathbb C}
\nc{\F}{\mathcal F}
\nc{\Q}{\mathbb Q}
\nc{\Z}{\mathbb Z}
\nc{\N}{\mathbb N}
\nc{\V}{\mathcal{V}}
\nc{\J}{\mathcal{J}}
\nc{\Sasha}[1]{{\color{violet} \sf $\heartsuit$ [#1]}}
\nc{\Asli}[1]{{\color{red} \sf $\heartsuit$ [#1]}}
\nc{\Emanuela}[1]{{\color{green} \sf $\heartsuit$ [#1]}}
\nc{\Sudeshna}[1]{{\color{blue} \sf $\heartsuit$ [#1]}}
\nc{\Jennifer}[1]{{\color{magenta} \sf $\heartsuit$ [#1]}}
\nc{\red}{\textcolor{red}}
\nc{\green}{\textcolor{green}}
\nc{\blue}{\textcolor{blue}}
\newtheorem{theorem}{Theorem}[section]
\newtheorem{proposition}[theorem]{Proposition}
\newtheorem{corollary}[theorem]{Corollary}
\newtheorem{question}[theorem]{Question}
\theoremstyle{definition}
\newtheorem{definition}[theorem]{Definition}
\newtheorem{remark}[theorem]{Remark}
\newtheorem{example}[theorem]{Example}
\DeclareMathOperator{\des}{des}
\DeclareMathOperator{\height}{height}
\DeclareMathOperator{\ini}{in}
\DeclareMathOperator{\HS}{HS}
\DeclareMathOperator{\HF}{HF}
\DeclareMathOperator{\rank}{rank}
\DeclareMathOperator{\reg}{reg}
\DeclareMathOperator{\e}{e}
\DeclareMathOperator{\ainv}{a}
\def\sort{\operatorname{sort}}
\begin{document}

\title{Invariants of toric double determinantal rings}

\author[Biermann]{Jennifer Biermann}
\address{Department of Mathematics and Computer Science, Hobart and William Smith Colleges \\
Geneva, NY 14456}
\email{biermann@hws.edu}

\author[De Negri]{Emanuela De Negri}
\address{Dipartimento di Matematica, Universit\`a di Genova, Via Dodecaneso 35, 16146 Genova, Italy}
\email{denegri@dima.unige.it}

\author[Gasanova]{Oleksandra Gasanova}
\address{Faculty of Mathematics, University of Duisburg-Essen, Essen 45127, Germany}
\email{oleksandra.gasanova@uni-due.de}

\author[Musapasaoglu]{Asl\i\  Musapa\c{s}ao\u{g}lu}
\address{Sabanci University, Faculty of Engineering and Natural Sciences, Orta Mahalle\\
 Tuzla 34956 Istanbul, Turkey}
\email{atmusapasaoglu@sabanciuniv.edu}

\author[Roy]{Sudeshna Roy}
\address{Department of Mathematics, Indian Institute of Technology Gandhinagar, Palaj, Gandhinagar, 382055 Gujarat, India}
\email{sudeshna.roy@iitgn.ac.in}

\keywords{Hibi rings, determinantal rings, Hilbert functions}
\subjclass[2020]{05E40, 13F65, 14M12}
\date{\today}

\begin{abstract}
We study a class of double determinantal ideals denoted  $I_{mn}^r$, which are generated by minors of size 2, and show that they are equal to the Hibi rings of certain finite distributive lattices. We compute the number of minimal generators of $I_{mn}^r$, as well as the multiplicity, regularity, a-invariant, Hilbert function, and $h$-polynomial of the ring $R/I_{mn}^r$, and we give a new proof of the dimension of $R/I_{mn}^r$.  We also characterize when the ring $R/I_{mn}^r$ is Gorenstein, thereby answering a question of Li in the toric case.  Finally, we give combinatorial descriptions of the facets of the Stanley-Reisner complex of the initial ideal of $I_{mn}^r$ with respect to a diagonal term order. 
\end{abstract}
\maketitle

\section*{Introduction}

Determinantal rings and varieties are a central topic in commutative algebra and algebraic geometry. They have been studied from a variety of points of view and with methods coming from algebra, geometry, and combinatorics, see for example \cite{BCRV}.
Besides the classical determinantal varieties defined by minors of a generic matrix, other more general classes have been studied, for example ladder determinantal rings (see \cite{C}, \cite{GM}, \cite{G}), which are defined by special subregions of a generic matrix. 

Double determinantal ideals $I_{mn}^r$ are, by contrast, generated by minors, possibly of different size, of the vertical and of the horizontal concatenation of $r$ generic $m\times n$ matrices of indeterminates. They were introduced in \cite{L1} by Li as instances of Nakajima quiver varieties, but are also a natural generalization of classical determinantal rings and varieties, which can be obtained by setting $r=1$.

The starting point of our work is two papers. The first is \cite{FK}, where Klein and Fieldsteel prove, answering a conjecture of Li, that double determinantal varieties are normal, irreducible, and Cohen-Macaulay, and that their defining ideals have a Gr\"obner basis given by their natural generators. The second is \cite{CDS}, where the case $r=2$ is studied and a description of the simplicial complex associated to the initial ideal is given; moreover, a formula for the multiplicity and a bound for the regularity are given, and a conjecture on Gorensteinness is stated. 

In \cite{L}, Li introduced a more general class of ideals, the bipartite determinantal ideals of which double determinantal ideals are a subclass. In a later paper \cite{IL}, Illian and Li prove that the natural generators are a Gröbner basis with respect to any diagonal order. In \cite{L}, the author gives a description of the simplicial complex associated to the corresponding initial ideal. The complex turns out to be vertex decomposable, and thus shellable. Moreover, the author writes a combinatorial formula for the $h$-vector and for the multiplicity, by describing families of paths whose cardinality equals these invariants. But finding a closed formula for these cardinalities is very hard, in fact Li asks in \cite[Sections 5.3.2,5.3.3]{L} for a nice formula for the $h$-vector and multiplicity, and characterization of Gorensteinness. 

In this paper, we give answers to Li's questions in the toric case.
We study toric double determinantal rings by using combinatorial tools. This case has already been studied in \cite{BKMM}, where the authors give new proofs of some of the results in \cite{FK} and prove that double determinantal varieties are smooth. We use a different approach to study these rings.

Let $I_{mn}^r$ be the ideal generated by the $2$-minors of the horizontal and vertical concatenation of the generic $m\times n$ matrices $X_1,\ldots,X_r$, and let $R$ be the polynomial ring with coefficients in a field $K$, and indeterminates the $mnr$ entries of the matrices.

In Sections 1 and 2 of this paper we give background material on $K$-algebras generated by sortable sets of monomials, lattices, posets and Hibi rings.  Moreover we recall how a Hibi ring can be seen as a $K$-algebra generated by a sortable set of monomials and we introduce a finite distributive lattice that we call $L_{mnr}$.

In Section 3 we show that the toric double determinantal ring $R/I_{mn}^r$ is the Hibi ring of $L_{mnr}$ (\Cref{thm: toric presentation}). We also give an explicit description of a set of minimal generators of the ideal $I_{mn}^r$ (\Cref{thm: mingens}).

In Section 4 we exploit the fact that $R/I_{mn}^r$ is a Hibi ring to compute the number of minimal generators of $I_{mn}^r$, as well as the multiplicity, regularity, a-invariant and Hilbert function of the ring $R/I_{mn}^r$, and give a new proof of the dimension of $R/I_{mn}^r$ (\Cref{thm: invariants}).  As corollaries to this result we give two descriptions of the $h$-polynomial of $R/I_{mn}^r$.  Moreover, we prove the surprising result that if $\sigma$ is a permutation of $\{m,n,r\}$, then $R/I_{mn}^r\cong R/I_{\sigma(m)\sigma(n)}^{\sigma(r)}$ (\Cref{thm: isomorphism}). 
The formulation of $R/I_{mn}^r$ as a Hibi ring also gives a new simple proof of the known fact (see \cite{BKMM} and \cite{L}) that $R/I_{mn}^r$ is a domain.  Lastly in \Cref{thm: Gorenstein}, we are able to answer a question of Li by providing a characterization of when the ring $R/I_{mn}^r$ is Gorenstein.

Finally, in Section 5, we study the simplicial complex $\Delta$ associated to the initial ideal of $I_{mn}^r$ with respect to a diagonal order. We give two descriptions of the facets of $\Delta$, in \Cref{facets} and \Cref{NMR}, the first of which generalizes the one given in \cite[Theorem 3.2]{CDS}.

\section{Preliminaries}
\subsection{Toric rings defined by sorting relations}
\label{sect:sortable}

Let $s$ be a positive integer and let $S_s$ be the
$K$-vector space spanned by the
monomials of degree $s$ in the standard graded polynomial ring $S = K[t_1,\ldots,t_{\ell}]$. 

We define a linear map
$$\sort: S_s \times S_s \to S_s \times S_s,$$ 
that we call the \emph{sorting map.}

Given two monomials $u_1,u_2\in
S_s$, we write $u_1u_2 = t_{\ell_1}t_{\ell_2}\cdots t_{\ell_{2s}}$ with $1\le \ell_1\le \ell_2\le\ldots\le \ell_{2s} \le \ell$ and set
$$u_3=t_{\ell_1}t_{\ell_3}\cdots t_{\ell_{2s-1}},\quad u_4= t_{\ell_2}t_{\ell_4}\cdots t_{\ell_{2s}}.$$ Then we set $\sort(u_1, u_2)=(u_3,u_4).$ 
Note that the sorting depends on the order of the variables.

\begin{definition}
\label{def: sorting} 
\begin{enumerate}
\item The pair $(u_3,u_4)=\sort(u_1, u_2)$ is called the \emph{sorting} of $(u_1, u_2)$. 
\item A pair $(u_1, u_2)$ is \emph{sorted} if $\sort(u_1, u_2)=(u_1, u_2)$. 
\item A subset $A \subseteq S_s$ of monomials is called \emph{sortable} if
$\sort(A \times A) \subseteq A \times A$. 
\end{enumerate}
\end{definition}
Since $\sort(u_1,u_2)=\sort (u_2,u_1)$, we will sometimes refer to unordered pairs and say that $\{u_1,u_2\}$ is sorted if either $(u_1, u_2)$ or $(u_2, u_1)$ is sorted.

Let $A\subset S_s$ be a sortable set of monomials and let $K[A]$ be the $K$-algebra
generated by $A$.  Let $R=K[x_u\mid u\in A]$ and let $\phi : R \to K[A]$ be
the $K$-algebra homomorphism defined by $x_u\mapsto u$ for all $u\in A$. Then $K[A] \cong R/I_A$ where $I_A = \ker(\phi)$.
\begin{theorem}
[{\cite[Theorem 14.2]{St}}]
\label{thm:sortingGB}
Let $A$ be a sortable set of monomials and let
$$G = \left\{x_{u_1}x_{u_2}-x_{u_3}x_{u_4}: \{u_1, u_2\} \text{ unsorted}, (u_3, u_4) = \sort(u_1, u_2)\right\} \subset R.$$
Then $I_A=(G)$ and moreover there exists a monomial order $\tau$ on $R$ with respect to which $G$ is a reduced Gröbner basis. 
\end{theorem}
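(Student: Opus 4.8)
The plan is to establish the stronger Gröbner basis statement first, from which the fact that $G$ generates $I_A$ follows automatically. Two ingredients are needed: a monomial order $\tau$ for which the leading term of each binomial $x_{u_1}x_{u_2}-x_{u_3}x_{u_4}$ in $G$ is the \emph{unsorted} monomial $x_{u_1}x_{u_2}$, and a proof that, with respect to this $\tau$, the set $\ini_\tau(G)=\{x_{u_1}x_{u_2}:(u_1,u_2)\text{ unsorted}\}$ already generates the whole initial ideal $\ini_\tau(I_A)$. I would prove the latter by a standard-monomial counting argument rather than by checking Buchberger's criterion directly.

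\emph{Construction of the order.} The key point is that a single sorting step $(u_1,u_2)\mapsto(u_3,u_4)$ is directed: it strictly decreases a natural monovariant on pairs (for instance the number of inversions between the two sorted words representing $u_1$ and $u_2$). I would exhibit a weight vector $w=(w_u)_{u\in A}$ with $w_{u_1}+w_{u_2}>w_{u_3}+w_{u_4}$ for every sorting relation; such a vector exists precisely because this monovariant forbids any cyclic chain of defining inequalities. Refining $w$ by any term order then yields a term order $\tau$ on $R$ with $\ini_\tau(x_{u_1}x_{u_2}-x_{u_3}x_{u_4})=x_{u_1}x_{u_2}$ for all unsorted $(u_1,u_2)$, so that $J:=(\ini_\tau(G))\subseteq\ini_\tau(I_A)$.

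\emph{Counting argument.} A monomial of $R$ is a standard monomial of $J$ exactly when it is \emph{sorted}, i.e.\ can be written as $x_{v_1}\cdots x_{v_k}$ with every consecutive pair $(v_i,v_{i+1})$ sorted. Using that $A$ is sortable, so that each sorting step keeps us inside $A$, I would show that repeated application of $\sort$ to adjacent factors of any monomial $x_{w_1}\cdots x_{w_k}$ terminates (by the monovariant above) in a \emph{unique} sorted monomial; uniqueness is the confluence of this rewriting system, which I would obtain from local confluence via Newman's Lemma. Consequently the map sending a sorted monomial $x_{v_1}\cdots x_{v_k}$ to $\phi(x_{v_1}\cdots x_{v_k})=v_1\cdots v_k\in S$ is injective, since the sorted word is a complete invariant of the $\phi$-fibre. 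As distinct monomials of $S$ are linearly independent, the sorted monomials are linearly independent modulo $I_A=\ker(\phi)$.

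\emph{Conclusion and obstacle.} The standard monomials of $\ini_\tau(I_A)$ form a $K$-basis of $R/I_A=K[A]$, and since $J\subseteq\ini_\tau(I_A)$ they are a subset of the standard monomials of $J$, namely the sorted monomials. The previous paragraph shows the sorted monomials are linearly independent in $K[A]$; as they contain the spanning set given by the standard monomials of $\ini_\tau(I_A)$, the two sets must coincide, forcing $J=\ini_\tau(I_A)$. Hence $G$ is a Gröbner basis of $I_A$, and in particular $G$ generates $I_A$. Reducedness is then a direct check: each leading term $x_{u_1}x_{u_2}$ is a minimal generator of $J$, and the trailing term $x_{u_3}x_{u_4}$ of each binomial is sorted, hence a standard monomial divisible by no leading term. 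I expect the crux to be the confluence of the sorted normal form, i.e.\ proving that the order in which one sorts adjacent pairs does not affect the final sorted monomial; this is where sortability of $A$ is essential and where Newman's Lemma, together with the terminating monovariant, must be invoked carefully.
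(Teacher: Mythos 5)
The paper does not prove this statement at all: it is imported verbatim from Sturmfels (\cite[Theorem 14.2]{St}), so the only meaningful comparison is with Sturmfels' own argument, whose overall shape---mark the unsorted monomial of each binomial as the leading term, identify the sorted monomials as the standard monomials, and conclude by a dimension count in each degree---you have reproduced correctly. Your final counting paragraph and the reducedness check are fine. Two of your intermediate steps, however, have genuine gaps.

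The more serious one is the injectivity step. You deduce that a sorted monomial is determined by its $\phi$-image from confluence of the rewriting system, but even granting confluence this does not follow: confluence gives a unique normal form for each \emph{starting monomial}, not for each \emph{fibre} of $\phi$. Two distinct sorted monomials in the same fibre are each already in normal form, and confluence says nothing about them unless you already know they are connected by $G$-moves; but connectivity of the fibres under $G$-moves is essentially the inclusion $I_A\subseteq (G)$ that you are trying to prove, so the argument is circular as written. Moreover, local confluence (the input to Newman's Lemma) at the overlaps $x_{u_1}x_{u_2}x_{u_3}$ is exactly the $S$-pair computation you set out to avoid. The standard repair is direct and elementary: a monomial $x_{v_1}\cdots x_{v_k}$ all of whose pairs of factors are sorted (note: all pairs, not only consecutive ones, since every unsorted pair contributes a generator of $J$) must be the canonical one obtained by writing $v_1\cdots v_k=t_{\ell_1}\cdots t_{\ell_{ks}}$ with weakly increasing indices and giving the $i$-th factor the variables $t_{\ell_i},t_{\ell_{i+k}},t_{\ell_{i+2k}},\dots$; this yields uniqueness per fibre with no rewriting theory, and confluence then follows as a corollary instead of serving as an input. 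The second gap is the term order: the existence of an additive weight vector with $w_{u_1}+w_{u_2}>w_{u_3}+w_{u_4}$ does not follow from a monovariant defined on \emph{pairs} forbidding cycles of single sorting steps; feasibility of that strict linear system requires, via Gordan/Farkas, that no positive combination of the vectors $e_{u_1}+e_{u_2}-e_{u_3}-e_{u_4}$ vanishes. This is exactly what Sturmfels' criterion (a marked binomial set whose reduction relation is Noetherian admits a compatible term order, \cite[Theorem 3.12]{St}) supplies; you should invoke it, or exhibit an explicit weight, rather than assert feasibility from acyclicity. With these two repairs your proof goes through and coincides in substance with the cited one.
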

The elements of $G$ in \Cref{thm:sortingGB} are called \emph{sorting relations}.
\begin{remark}
\label{rem: minimal sort}
\begin{enumerate}
    \item 
In \cite{St} Sturmfels constructs a monomial order and proves {\cite[Theorem 14.2]{St}} with respect to it. However, the proof only relies on the fact that this is a sorting order, that is, it is a monomial order $\tau$ which fixes unsorted pairs as leading terms. In other words, $\tau$ satisfies $\ini_{\tau}(x_{u_1}x_{u_2}-x_{u_3}x_{u_4})=x_{u_1}x_{u_2}$ for all $\{u_1, u_2\}$ unsorted, with $(u_3, u_4) = \sort(u_1, u_2)$. 

\noindent We want to emphasize that the statement of \cite[Theorem 14.2]{St} holds for \emph{any} sorting order.
     
\item The generating set $G$ in \Cref{thm:sortingGB} is minimal. On the contrary, suppose that $f,f_1,\ldots,f_q$ are distinct elements of $G$ and that $f=\varepsilon_1f_1+\cdots +\varepsilon_qf_q$. Then $\varepsilon_1,\ldots, \varepsilon_q\in \{1,-1\}$, since all these elements have  degree $2$. 
But the monomial corresponding to the unsorted pair in $f$ is different from all the terms appearing in $f_1,\ldots, f_q$, which gives a contradiction. 

\end{enumerate}
\end{remark}

\subsection{Posets, lattices and Hibi rings} We review the definitions and basic properties of posets and lattices. For further reading, we recommend \cite{B} and \cite{Stn}. 
A \emph{partially ordered set} $P$ (or \emph{poset}, for short) is a set, together with a binary relation $\preceq$ (or $\preceq_P$ when there is a possibility of confusion), 
satisfying the axioms of reflexivity, antisymmetry, and transitivity. In this paper all posets will be finite.

For $u_1, u_2 \in P$, we will say that $u_1$ and $u_2$ are \emph{comparable} (denoted by $u_1\sim u_2$) if either  $u_1\preceq u_2$ or $u_2\preceq u_1$, otherwise we will say that $u_1$ and $u_2$ are \emph{incomparable}. A \emph{chain} of $P$ is a subset $C\subseteq P$ such that every pair of elements in this subset is comparable. By $|C|$ we will denote the number of elements in $C$, while the \emph{length} of $C$ is defined as $|C|-1$. The \emph{rank} of $P$ is the length of its longest chain. If every maximal chain of a finite poset $P$ has the same length, then we say that $P$ is \textit{pure}. The \textit{width} of a poset is the smallest number of disjoint chains needed to cover $P$.

For $u_1, u_2 \in P$, a \emph{join} of $u_1$ and $u_2$ is an element $u\in P$ such that $u\succeq u_1$ and $u\succeq u_2$, and such that any other element $u'$ with the same property satisfies $u'\succeq u$. If a join of $u_1$ and $u_2$ exists, it is clearly unique and is denoted by $u_1\vee u_2$.  Dually, one can define the \textit{meet} $u_1\wedge u_2$, when it exists. A \textit{lattice}
is a poset for which every pair of elements has a meet and a join. A lattice is called \emph{distributive} if the operations $\vee$ and $\wedge$ distribute over each other.

An \emph{ideal} of a poset $P$ is a subset $I$ of $P$ such that if $u_1 \in I$ and $u_2\preceq u_1$, then $u_2 \in I$. In other words, an ideal is a downward closed subset of $P$. The set $\mathcal{J}(P)$ of ideals of $P$ is a poset, ordered by inclusion. Since the union and intersection of ideals is again an ideal, it follows that $\mathcal{J}(P)$ is a lattice with the join of two ideals being their union, and the meet being their intersection. From the well-known distributivity of set union and intersection over
one another, it follows that $\mathcal{J}(P)$ is in fact a distributive lattice. 
Birkhoff's fundamental theorem for finite distributive lattices states that the converse is true when the lattice is finite (see for instance \cite[Theorem 3.4.1]{Stn}). 

\begin{theorem}(Birkhoff)
Let $L$ be a finite distributive lattice. Then there exists a finite poset $P$ such that $L\cong \J(P)$. 
\end{theorem}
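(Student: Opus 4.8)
The plan is to realize $L$ as the lattice of ideals of its poset of join-irreducible elements. Call an element $p \in L$ \emph{join-irreducible} if it is not the bottom element $\hat{0}$ and whenever $p = a \vee b$ one has $p = a$ or $p = b$; let $P$ denote the set of join-irreducible elements of $L$, equipped with the order induced from $L$. I would take this $P$ as the candidate poset and define $\varphi : L \to \J(P)$ by $\varphi(x) = \{p \in P : p \preceq x\}$. Each $\varphi(x)$ is downward closed by transitivity, hence an ideal, and $\varphi$ is visibly order-preserving. The goal is to show $\varphi$ is an isomorphism of posets, which for lattices automatically upgrades to a lattice isomorphism, since joins and meets are determined by the order.

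First I would establish that in any finite lattice every element is the join of the join-irreducibles lying below it, i.e.\ $x = \bigvee \varphi(x)$ (with the empty join being $\hat{0}$). This follows by induction on the number of elements below $x$: if $x$ is join-irreducible (or equals $\hat{0}$) this is immediate, and otherwise we write $x = a \vee b$ with $a, b \prec x$, apply the inductive hypothesis to $a$ and $b$, and use $\varphi(a), \varphi(b) \subseteq \varphi(x)$. This identity gives injectivity of $\varphi$, since $\varphi(x) = \varphi(y)$ forces $x = \bigvee \varphi(x) = \bigvee \varphi(y) = y$, and it shows $\varphi$ reflects order, since $\varphi(x) \subseteq \varphi(y)$ yields $x = \bigvee \varphi(x) \preceq y$.

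The crux of the argument, and the only place where distributivity enters, is surjectivity. Given an ideal $I \in \J(P)$, set $x = \bigvee_{p \in I} p$; I must show $\varphi(x) = I$. The inclusion $I \subseteq \varphi(x)$ is immediate. For the reverse, I would isolate and prove the key lemma that in a distributive lattice every join-irreducible element is \emph{join-prime}: if $p$ is join-irreducible and $p \preceq a \vee b$, then $p \preceq a$ or $p \preceq b$. This rests squarely on distributivity, via
\[
p = p \wedge (a \vee b) = (p \wedge a) \vee (p \wedge b),
\]
so join-irreducibility forces $p = p \wedge a$ or $p = p \wedge b$. Iterating over the finite join $x = \bigvee_{p \in I} p$, any join-irreducible $q \preceq x$ satisfies $q \preceq p$ for some $p \in I$, and since $I$ is downward closed, $q \in I$; thus $\varphi(x) \subseteq I$.

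Combining these steps, $\varphi$ is an order-preserving, order-reflecting bijection, hence a poset isomorphism $L \cong \J(P)$, and therefore a lattice isomorphism. The main obstacle is precisely the join-prime lemma: this is the single point at which the distributivity hypothesis is indispensable, for without it join-irreducibles need not be join-prime and surjectivity of $\varphi$ breaks down. I would therefore take care to prove this lemma cleanly before assembling the bijection, treating the remaining verifications (the decomposition into join-irreducibles and the order-theoretic bookkeeping) as the routine finite-lattice arguments they are.
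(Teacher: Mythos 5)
Your proof is correct: the decomposition of each element as the join of the join-irreducibles below it, together with the observation that distributivity makes join-irreducibles join-prime, is exactly the standard argument. The paper does not prove this classical theorem itself but only cites it (Stanley, \emph{Enumerative Combinatorics}, Theorem 3.4.1), and the proof given there is essentially the one you have written, so there is nothing to reconcile.
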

\begin{definition}
    Let $L$ be a finite lattice. The \emph{Hibi ring} of $L$, denoted $K[L]$, is the quotient ring $K[x_u\mid u\in L]/(x_{u_1}x_{u_2}-x_{u_1\wedge u_2}x_{u_1\vee u_2}\mid u_1,u_2\in L)$.  
    The generators of this presentation ideal of $K[L]$ are called \emph{Hibi relations}.
\end{definition}
Clearly, only incomparable unordered pairs give non-trivial Hibi relations, thus  $$K[L]=K[x_u\mid u\in L]/(x_{u_1}x_{u_2}-x_{u_1\wedge u_2}x_{u_1\vee u_2}\mid \{u_1,u_2\}\subseteq L, u_1\nsim u_2).$$
These rings were introduced by Hibi in \cite{Hi}, where he proves that $K[L]$ is toric if and only if $L$ is distributive, and in this case $K[L]$ is a normal Cohen-Macaulay domain. All Hibi rings considered in this section will be defined by distributive lattices. A lot of invariants of Hibi rings can be read off directly from $L$ and $P$, where $L=\mathcal{J}(P)$, which we will use in the next section. For more information, the reader may consult the survey \cite{E}, and the papers \cite{EHM}, \cite{Hi} and \cite{HiHi}. The next theorem and remark are similar to those in \Cref{sect:sortable}.

\begin{theorem}[{\cite[Theorem 10.1.3]{HH}}] 
\label{thm:hibiGB}
Let $$G=\{x_{u_1}x_{u_2}-x_{u_1\wedge u_2}x_{u_1\vee u_2}\mid \{u_1,u_2\}\subseteq L, u_1\nsim u_2\}.$$
Then there exists a monomial order $\tau$ with respect to which $G$ is a reduced Gröbner basis.
\end{theorem}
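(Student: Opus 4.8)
The plan is to deduce this theorem from the sorting machinery of \Cref{sect:sortable}, in exactly the way \Cref{thm:sortingGB} was used there: I would realize the Hibi ring as the $K$-algebra generated by an explicit sortable set of monomials whose sorting relations are precisely the Hibi relations. Since the section assumes $L$ is distributive, Birkhoff's theorem lets me write $L \cong \J(P)$ for a finite poset $P = \{p_1, \dots, p_n\}$, so that the elements of $L$ are the order ideals of $P$, with $\wedge = \cap$ and $\vee = \cup$. I would then encode each ideal $\alpha \in L$ by the squarefree monomial
$$u_\alpha = \prod_{p_i \in \alpha} t_i \prod_{p_i \notin \alpha} s_i \in S = K[s_1, t_1, \dots, s_n, t_n],$$
which has degree $n$ for every $\alpha$, and set $A = \{u_\alpha : \alpha \in L\} \subseteq S_n$. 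The map $\alpha \mapsto u_\alpha$ is injective, and the identification $x_{u_\alpha} \leftrightarrow x_\alpha$ matches the variables of $K[A]$ with those of the Hibi presentation.

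The key step is a direct computation of the sorting map on $A$, carried out with respect to the variable order $s_1 < t_1 < s_2 < t_2 < \cdots < s_n < t_n$. For a fixed pair $\alpha, \beta$, the product $u_\alpha u_\beta$ contributes, at each index $i$, the multiset $\{t_i, t_i\}$, $\{s_i, s_i\}$, or $\{s_i, t_i\}$ according to whether $p_i$ lies in both, neither, or exactly one of $\alpha, \beta$. Because the two variables of index $i$ occupy consecutive (one odd, one even) positions in the sorted word $u_\alpha u_\beta = t_{\ell_1} \cdots t_{\ell_{2n}}$, I can read off that
$$\sort(u_\alpha, u_\beta) = (u_{\alpha \cap \beta}, u_{\alpha \cup \beta}) = (u_{\alpha \wedge \beta}, u_{\alpha \vee \beta}),$$
since the odd-position monomial takes $s_i$ precisely when $p_i$ belongs to at most one of $\alpha, \beta$, i.e. it equals $u_{\alpha \wedge \beta}$, while the even-position monomial equals $u_{\alpha \vee \beta}$. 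As $L$ is a lattice, $\alpha \wedge \beta, \alpha \vee \beta \in L$, so $A$ is sortable; and by injectivity of $\alpha \mapsto u_\alpha$, the pair $(u_\alpha, u_\beta)$ is sorted if and only if $\{\alpha \wedge \beta, \alpha \vee \beta\} = \{\alpha, \beta\}$, which holds exactly when $\alpha \sim \beta$. Hence unsorted pairs correspond bijectively to incomparable pairs, and the sorting relation of an unsorted pair is $x_{u_\alpha} x_{u_\beta} - x_{u_{\alpha \wedge \beta}} x_{u_{\alpha \vee \beta}}$, i.e. exactly the nontrivial Hibi relation of $\{\alpha, \beta\}$. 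Thus the set $G$ of Hibi relations coincides with the set of sorting relations of $A$, and \Cref{thm:sortingGB} yields $K[L] = K[A] = R/I_A$ with $I_A = (G)$, together with a sorting order $\tau$ for which $G$ is a reduced Gröbner basis.

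I expect the main obstacle to be the sorting computation itself: choosing the variable order so that each index occupies one odd and one even slot, and then verifying that the resulting odd/even split reproduces meet and join. Everything else is formal — injectivity of the encoding, sortability from the lattice property, and the matching of unsorted with incomparable pairs all follow once this identity is established — and the conclusion, including reducedness and (as in \Cref{rem: minimal sort}) minimality of $G$, is then immediate from \Cref{thm:sortingGB}.
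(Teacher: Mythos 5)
Your argument is correct. Note first that the paper does not prove this statement at all: it is quoted from Herzog--Hibi \cite{HH} with no argument, so there is no in-paper proof to match. What you have done is supply a self-contained derivation from \Cref{thm:sortingGB}, and it is sound: with the variable order $s_1<t_1<\cdots<s_n<t_n$, each index $i$ contributes exactly two letters to $u_\alpha u_\beta$, occupying positions $2i-1$ and $2i$ of the sorted word, so the odd/even split does return $(u_{\alpha\cap\beta},u_{\alpha\cup\beta})$; distributivity (assumed throughout the section, and needed here --- the Hibi relations are not a Gr\"obner basis for non-distributive $L$) enters exactly where you invoke Birkhoff to get $\wedge=\cap$ and $\vee=\cup$. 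Your route is in fact the same strategy the paper uses for the neighboring \Cref{thm: hibisortable} via \cite{GN}, but with a different sortable encoding: you attach to each order ideal $\alpha$ the degree-$|P|$ indicator monomial $\prod_{p_i\in\alpha}t_i\prod_{p_i\notin\alpha}s_i$ in $2|P|$ variables, whereas \cite{GN} fixes a minimal chain decomposition $A_1,\dots,A_s$ and uses degree-$s$ monomials recording $|\alpha\cap A_j|$. Your encoding is canonical (no choice of chain decomposition) and makes the odd/even bookkeeping transparent; the \cite{GN} encoding is more economical in the number of variables and is the one the paper actually needs later, since for $P_{mnr}$ it produces exactly the monomials $x_iy_jz_k$ that realize $R/I_{mn}^r$ as $K[A_{mnr}]$. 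Either way the conclusion, including reducedness of $G$ and its minimality as in \Cref{rem: minimal sort}(2), follows as you say.
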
 

\begin{remark}
\label{rem: hibiorder}
\begin{enumerate}
    \item One can show that the above theorem holds for any monomial order $\tau$ which fixes incomparable pairs as leading terms, in other words, such that $\ini_{\tau}(x_{u_1}x_{u_2}-x_{u_1\wedge u_2}x_{u_1\vee u_2})=x_{u_1}x_{u_2}$ for all $\{u_1, u_2\}\subseteq L$ such that $u_1\nsim u_2$.  Any such $\tau$ is called a \emph{Hibi order}.
\item The generating set $G$ in \Cref{thm:hibiGB} is minimal. The proof is analogous to that of \Cref{rem: minimal sort}(2).

\end{enumerate}
\end{remark}

\section{Hibi rings as toric rings generated by sortable sets of monomials}
There are several known ways to view a toric Hibi ring as a semigroup ring, see \cite{Hi}.
We are interested in one specific way of doing this, which will turn out to be useful in studying toric double determinantal ideals. 

\begin{theorem}[{\cite[Theorem 5.1]{GN}}]
\label{thm: hibisortable}
 The Hibi ring of a finite distributive lattice $L$ can be realized as a $K$-algebra  generated by a sortable set of monomials indexed by the elements of $L$. Moreover, for $I, J \in L$  we have $\sort(u_I,u_J)=(u_{I\wedge J}, u_{I \vee J})$.
\end{theorem}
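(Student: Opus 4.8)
The plan is to give an explicit construction of the monomials $u_I$ attached to the poset ideals $I\in L=\mathcal J(P)$ and then verify directly that sorting these monomials computes meet and join. By Birkhoff's theorem we may assume $L=\mathcal J(P)$ for a finite poset $P=\{p_1,\dots,p_d\}$. To each order ideal $I\in\mathcal J(P)$ I would associate a squarefree (or more precisely, a suitably indexed) monomial $u_I$ in a polynomial ring $S=K[t_1,\dots,t_\ell]$, where the variables are indexed so that the divisibility/support of $u_I$ encodes membership in $I$. The natural choice is to use one variable $t_p$ for each $p\in P$ together with an extra ``dummy'' variable, and set $u_I=\prod_{p\in I}t_p\cdot\prod_{p\notin I}(\text{dummy for }p)$, so that every $u_I$ has the same degree $s=d$ and the positions where $u_I$ ``selects'' the genuine variable $t_p$ are exactly the elements of $I$. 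The key point of such an encoding is that the chosen linear order on the variables must be a linear extension of $P$, i.e.\ $p\preceq_P q$ forces $t_p$ to precede $t_q$ in the variable order; this is what will make sorting respect the lattice structure.

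With the monomials in hand, the substantive step is the computation $\sort(u_I,u_J)=(u_{I\wedge J},u_{I\vee J})$, where in $\mathcal J(P)$ we have $I\wedge J=I\cap J$ and $I\vee J=I\cup J$. First I would recall from \Cref{def: sorting} that sorting $(u_I,u_J)$ amounts to writing the product $u_Iu_J=t_{\ell_1}\cdots t_{\ell_{2s}}$ in weakly increasing order and then splitting into odd- and even-indexed factors. The heart of the verification is a position-by-position (i.e.\ element-by-element of $P$, processed in the linear-extension order) bookkeeping argument: for each $p\in P$ the pair $(u_I,u_J)$ contributes a genuine $t_p$ once if $p$ lies in exactly one of $I,J$ and twice if $p$ lies in both, and a careful count shows that after deinterleaving the odd positions collect precisely the elements of $I\cap J$ and the even positions precisely the elements of $I\cup J$ (with the dummy variables filling the complementary slots). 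The reason the linear-extension hypothesis is exactly what is needed is that it guarantees the genuine and dummy variables of comparable elements do not get shuffled across each other during the sort in a way that would break the clean odd/even split.

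The main obstacle I anticipate is getting this interleaving argument completely correct, since it is easy to state loosely but fiddly to verify in detail: one must track, for each block of equal variables in the sorted product $u_Iu_J$, whether it came from $I$, from $J$, or from both, and confirm that the alternating assignment to $u_3$ and $u_4$ lands multiplicities in the intersection and union correctly. A clean way to organize this is to prove the identity variable-by-variable, observing that sorting interacts independently with each variable's multiplicity (a multiplicity of $0$, $1$, or $2$ in $u_Iu_J$), so that the global claim reduces to a trivial check of the three local cases; the linear order on variables then ensures these local checks assemble into the claimed global split. Once $\sort(u_I,u_J)=(u_{I\wedge J},u_{I\vee J})$ is established, sortability of $A=\{u_I:I\in L\}$ is immediate since $I\wedge J,I\vee J\in L$, and comparing the sorting relations from \Cref{thm:sortingGB} with the Hibi relations from \Cref{thm:hibiGB}---both of which are $x_{u_I}x_{u_J}-x_{u_{I\wedge J}}x_{u_{I\vee J}}$ for incomparable pairs---yields $K[A]\cong K[L]$, completing the proof.
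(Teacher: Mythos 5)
Your proposal is correct and is in substance the same argument as the paper's, instantiated with a different chain decomposition. The paper fixes a minimal chain decomposition $A_1,\dots,A_s$ of $P$ (with $s$ the width), encodes $I\in\J(P)$ by the vector $\bigl(|I\cap A_i|+1\bigr)_{i}$, and takes $u_I$ to be the corresponding degree-$s$ monomial containing exactly one variable from each of $s$ contiguous blocks; sorting then acts blockwise and returns the coordinatewise $\min$ and $\max$, i.e.\ $(u_{I\cap J},u_{I\cup J})$. Your ``genuine variable or dummy for each $p$'' encoding is precisely this construction for the decomposition of $P$ into singleton chains: each element contributes a two-variable block, and your three local cases (multiplicity $0,1,2$ of $t_p$ in $u_Iu_J$) are exactly the blockwise $\min/\max$ check. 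Both versions prove the theorem; the paper's buys a smaller degree ($s=\operatorname{width}(P)$ rather than $|P|$), which is what later makes $K[L_{mnr}]$ appear as the algebra generated by the degree-$3$ monomials $x_iy_jz_k$.

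One point to repair: the hypothesis you single out as ``exactly what is needed'' --- that the variable order restrict to a linear extension of $P$ --- is in fact irrelevant, and it is not what makes the deinterleaving work. Since each $u_I$ contains exactly one variable from each element's block, the two occurrences coming from the $i$-th block always land in positions $2i-1$ and $2i$ of the sorted product, no matter how the blocks are ordered relative to one another; the partial order on $P$ plays no role here. What is essential, and what your write-up leaves implicit, is that $t_p$ and its dummy be \emph{adjacent} in the variable order (blocks contiguous) and that the dummy precede $t_p$, so that the odd positions collect non-membership and the first output is $u_{I\cap J}$. If instead you listed, say, all genuine variables before all dummies, the odd/even split would mix variables belonging to different elements and $A$ would fail to be sortable. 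Your claim that ``sorting interacts independently with each variable's multiplicity'' is exactly the assertion that requires this contiguous-block structure; once that is stated, the local verification and the final identification of sorting relations with Hibi relations go through as you describe.
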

We outline the proof of {\cite[Theorem 5.1]{GN}}, since we are mostly interested in \emph{how} one obtains a monomial associated to an element of $L$.

\noindent \textit{Proof outline.} 
Let $P$ be a poset such that $L=\J(P)$, and let $s$  be the width of $P$. Fix a minimal chain decomposition of $P$, that is, disjoint chains $A_1, A_2,\ldots, A_s$ such that their set-theoretical union equals the underlying set of $P$. To each $I\in L$ we associate a vector $v_I\in  \mathbb{N}^s$, where $(v_I)_i=|I\cap A_i|+1$ for all $i=1,\ldots,s$. Note that the authors of {\cite[Theorem 5.1]{GN}} define this vector slightly differently; for our purposes it is more convenient to define it this way.

We consider the polynomial ring $K[x^{(1)}_{1},\ldots, x^{(1)}_{|A_1|+1}, \dots, x^{(s)}_{1},\ldots, x^{(s)}_{|A_s|+1}]$, and we associated to  $I$ the monomial $u_I=x^{(1)}_{(v_I)_1}\cdots x^{(s)}_{(v_I)_s}$. This assignment is injective, in other words, monomials $u_I$ are indexed by $I\in L$. Indeed, since every $I\in L$ is a downward closed subset of $P$, it contains exactly the $(v_I)_i-1$ \emph{smallest} elements of $A_i$ for every $i=1,\ldots, s$. Hence, every $v_I$ uniquely identifies $I$. This also implies that $v_{I\cap J}=\min\{v_I,v_J\}$ and $v_{I\cup J}=\max\{v_I,v_J\}$, where $\min$ and $\max$ are taken coordinate-wise. It is then not hard to see that the set of monomials $A=\{u_I\mid I\in \J(P)\}$ is sortable if one sorts the variables firstly by upper index and secondly by lower index. Indeed,
\begin{align*}
&\sort(u_I,\, u_J)= \sort(x^{(1)}_{(v_I)_1}\cdots x^{(s)}_{(v_I)_s},\ x^{(1)}_{(v_J)_1}\cdots x^{(s)}_{(v_J)_s})\\
&=(x^{(1)}_{\min\{(v_I)_1, (v_J)_1\}}\cdots x^{(s)}_{\min\{(v_I)_s, (v_J)_s\}},\ x^{(1)}_{\max\{(v_I)_1, (v_J)_1\}}\cdots x^{(s)}_{\max\{(v_I)_s, (v_J)_s\}})\\ &= (u_{I\cap J},\, u_{I\cup J}).
\end{align*}

 Let $R=K[x_I\mid I\in L]$ and let $\phi : R \to K[A]$ be
the $K$-algebra homomorphism defined by $x_I\mapsto u_I$ for all $I\in L$.
Since $A$ is a sortable set of monomials, the ideal $I_A=\ker\phi$ is generated by the sorting relations (\Cref{thm:sortingGB}). From the equality above, it is not hard to see that every sorting relation is a Hibi relation and vice versa (in particular, $\{u_I,u_J\}$ is unsorted if and only if $I\nsim J$). Hence $K[A]\cong R/I_A\cong K[L]$.

\bigskip


Below we introduce a lattice that will be useful in the next section (see \Cref{fig: parallelepiped}).
\begin{definition}
\label{def:Lmnr}
  Given positive integers $m$, $n$, $r$, we denote by $P_{mnr}$ the poset consisting of $3$ disjoint chains $A_1$, $A_2$ and $A_3$ such that $|A_1|=m-1$, $|A_2|=n-1$ and $|A_3|=r-1$.  Moreover we set $L_{mnr}=\mathcal{J}(P_{mnr})$.
\end{definition}

In the next example, we consider the Hibi ring associated to the lattice just defined and, following the proof above, we show how it can be seen as a semigroup ring generated by a sortable set of monomials.
\begin{example}
\label{ex: hibisortable}
 We use the notation of \Cref{def:Lmnr}. Clearly, the width of $P_{mnr}$ is $s=3$, hence all the vectors have $3$ coordinates, and the associated monomials are in $K[x^{(1)}_{1},\ldots, x^{(1)}_{m}, x^{(2)}_{1},\ldots, x^{(2)}_{n}, x^{(3)}_{1},\ldots, x^{(3)}_{r}]$.
 .  
To each $I\in L_{mnr}$ we associate the vector $v_I=(\red{i},\green{j},\blue{k}) \in  \mathbb{N}^3$, where $i=|I\cap \red{A_1}|+1$, $j=|I\cap \green{A_2}|+1$, $k=|I\cap \blue{A_3}|+1$, and  the monomial $u_I=x_i^{(1)}x_j^{(2)}x_k^{(3)}$. Since we only have $3$ blocks of variables in our specific case, we  relabel $x_{*}^{(1)}\rightarrow x_{*}$, $x_{*}^{(2)}\rightarrow y_{*}$, $x_{*}^{(3)}\rightarrow z_{*}$. Hence if $v_I=(i,j,k)$, we will set  $u_I=x_iy_jz_k$. Note that the assignment $I\mapsto u_I$ is a bijection from $L_{mnr}$ to 
$$A_{mnr}=\{x_iy_jz_k\mid 1\le i\le m, 1\le j\le n,1\le k\le r\}.$$ Clearly, the set $A_{mnr}$ is sortable if one considers the natural order of variables:
\begin{align*}
\sort(u_I,\, u_J) &= \sort(x_{i_1}y_{j_1}z_{k_1},\ x_{i_2}y_{j_2}z_{k_2})\\
&=(x_{\min\{i_1,i_2\}}y_{\min\{j_1,j_2\}}z_{\min\{k_1,k_2\}},x_{\max\{i_1,i_2\}}y_{\max\{j_1,j_2\}}z_{\max\{k_1,k_2\}}\ )\\ &= (u_{I\cap J},\, u_{I\cup J}).
\end{align*}
Following the proof of \Cref{thm: hibisortable}, we conclude that $K[A_{mnr}]\cong K[L_{mnr}]$.

\begin{figure}[H]
\centering
\begin{tikzpicture}[scale=0.8]

\draw[thick,red] (-12,0)--(-12,2);
\filldraw[red] (-12,0) circle (2pt);
\filldraw[red] (-12,2) circle (2pt);
\node [thin, left] at (-12,0) {$\red{1}$};
\node [thin, left] at (-12,2) {$\red{2}$};
\node [thin, red] at (-12,-1) {$A_1$};

\filldraw[green] (-10,0) circle (2pt);
\node [thin, left] at (-10,0) {$\green{1}$};
\node [thin, green] at (-10,-1) {$A_2$};

\draw[thick,blue] (-8,0)--(-8,4);
\filldraw[blue] (-8,0) circle (2pt);
\filldraw[blue] (-8,2) circle (2pt);
\filldraw[blue] (-8,4) circle (2pt);
\node [thin, left] at (-8,0) {$\blue{1}$};
\node [thin, left] at (-8,2) {$\blue{2}$};
\node [thin, left] at (-8,4) {$\blue{3}$};
\node [thin, blue] at (-8,-1) {$A_3$};

\foreach \x in {1,...,3}{
\foreach \y in {1,...,2}{
\foreach \z in {1,...,4}{
\node [thin] at (-2*\x+2*\z,2*\x+2*\y+2*\z-6) {$\red{x_{\x}}\green{y_{\y}}\blue{z_{\z}}$};
\ifthenelse{\z > 1}
{\draw[thick] (-2*\x+2*\z-1.8,2*\x+2*\y+2*\z-1.8-6)--(-2*\x+2*\z-0.2,2*\x+2*\y+2*\z-0.2-6);}{}
\ifthenelse{\y > 1}
{\draw[thick] (-2*\x+2*\z,2*\x+2*\y+2*\z-1.8-6)--(-2*\x+2*\z,2*\x+2*\y+2*\z-0.2-6);}{}
\ifthenelse{\x > 1}
{\draw[thick] (-2*\x+2*\z+1.8,2*\x+2*\y+2*\z-1.8-6)--(-2*\x+2*\z+0.2,2*\x+2*\y+2*\z-0.2-6);}{}
}}}

\end{tikzpicture}
\caption{$P_{mnr}$ and $L_{mnr}$ for $\red{m}=3$, $\green{n}=2$, $\blue{r}=4$}
\label{fig: parallelepiped}
\end{figure}
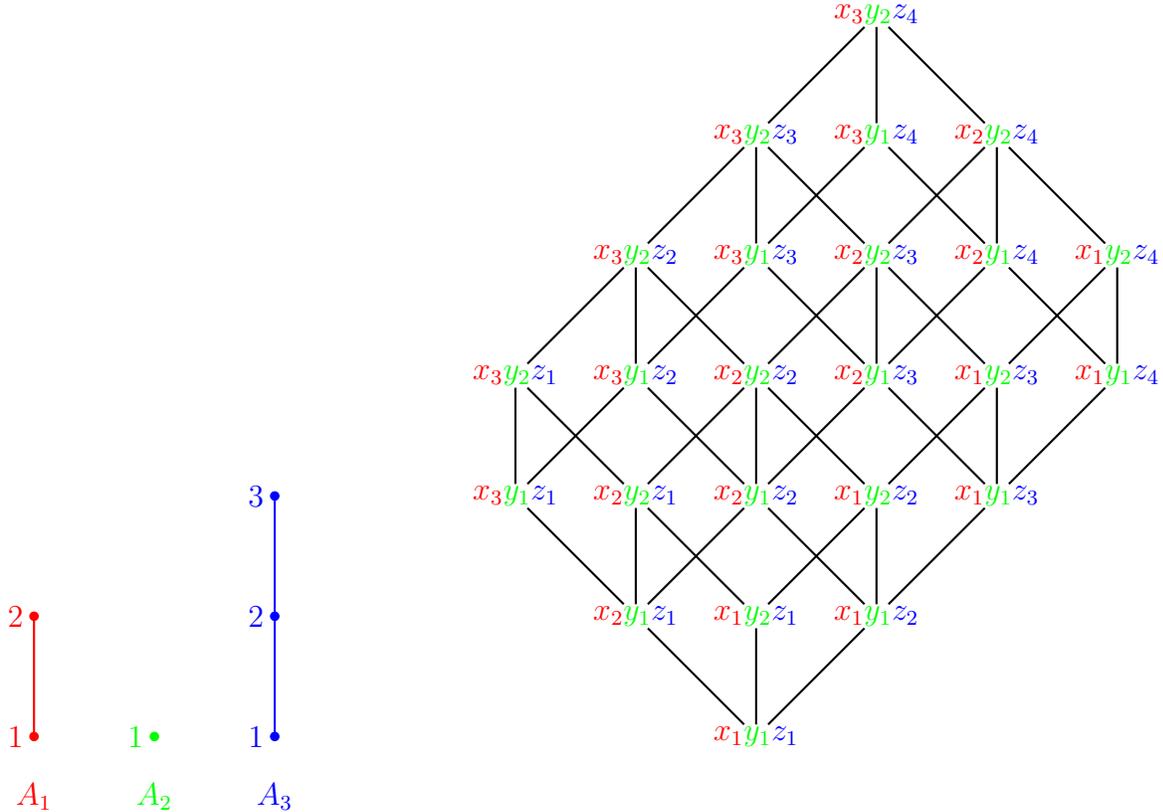
\end{example}

\section{Toric double determinantal rings as toric Hibi rings}
\subsection{Double determinantal rings}
Let $K$ be a field. Let $m,n,r$ be positive integers, let $X_k=(x_{ij}^k)$, with $k=1,\ldots,r$, be $m\times n$ 
matrices of distinct indeterminates, 
 and let $R = K[x_{ij}^k\,|\, 1\le i\le m,\ 1\le j\le n, \ 1 \le k \le r]$. 
Consider 
 $$H=(X_1\cdots X_r)  \quad \text{ and } \quad V =\left(\begin{matrix} X_1 \\ \vdots \\ X_r \end{matrix}\right)$$  
 the horizontal and vertical concatenation of the matrices $X_1,\ldots, X_r$. 
 The ideal $I_{mn}^r$  generated by the $t_h$-minors of $H$ and by the $t_v$-minors of $V$ is called a double determinantal ideal.
 In \cite{L}, a more general class of ideals is introduced, the bipartite determinantal ideals. By \cite[Corollary 3.3 and Corollary 4.5]{FK} and \cite[Corollary 1.3]{L} one has:
\begin{proposition}
\label{prop: CM}
The ring $R/I_{mn}^r$ is a Cohen-Macaulay domain for all $m,n,r\ge 1$. Assuming without loss of generality that $t_h\leq t_v$, one has
    \begin{align*}
    \height(I_{mn}^r)& =(m-t_h+1)(n-t_h+1)+(m-t_h+1)n(r-1)\\
    &+(n-t_v+1)\sum_{q=2}^r(t_h-1)-\max\{0, t_v-(t_h-1)(q-1) -1\}. 
    \end{align*}
\end{proposition}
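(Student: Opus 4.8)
The Cohen--Macaulay and domain assertions need no fresh argument: \cite[Corollary 3.3 and Corollary 4.5]{FK} show that the double determinantal variety is irreducible and Cohen--Macaulay, and \cite[Corollary 1.3]{L} gives the same in the bipartite setting; irreducibility yields the domain property. So the only substantive point is the height, and since $R/I_{mn}^r$ is Cohen--Macaulay we have $\height(I_{mn}^r)=mnr-\dim(R/I_{mn}^r)$. The plan is therefore to compute the Krull dimension and subtract.

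The natural tool is the diagonal Gröbner basis. By \cite{FK} (and by \cite{IL} in the bipartite case) the $t_h$-minors of $H$ together with the $t_v$-minors of $V$ form a Gröbner basis for $I_{mn}^r$ with respect to a diagonal term order, so $\ini(I_{mn}^r)$ is a squarefree monomial ideal and $\dim(R/I_{mn}^r)=\dim(R/\ini(I_{mn}^r))$ equals the dimension of the associated Stanley--Reisner complex $\Delta$. Consequently $\height(I_{mn}^r)=mnr-f$, where $f$ is the number of variables in a largest facet of $\Delta$, and the task reduces to the purely combinatorial problem of determining $f$ from the facet description of $\Delta$ given in \cite{L}.

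As an organizing principle I would separate the horizontal and vertical contributions. A direct expansion collapses the first line of the formula, namely $(m-t_h+1)(n-t_h+1)+(m-t_h+1)n(r-1)=(m-t_h+1)(nr-t_h+1)$, which is exactly the classical codimension of the ideal of $t_h$-minors of the generic $m\times nr$ matrix $H$. Since $I_{mn}^r$ contains this ideal, that quantity is the height contributed by the horizontal constraints alone, and the sum $\sum_{q=2}^r$ should account for the extra codimension forced by the $t_v$-minors of $V$, organized block by block. The index $q$ ranges over the $r-1$ copies beyond the first because the vertical constraints not already implied by the horizontal minors are sorted according to how deep into the vertical stack they reach.

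The crux, and the main obstacle, is the second summand, and in particular the correction $\max\{0,\,t_v-(t_h-1)(q-1)-1\}$ subtracted in each block: it measures how much of the $q$-th vertical constraint is already absorbed by the horizontal minors, and pinning it down requires an exact count of a largest facet rather than a mere dimension bound. This is precisely the kind of closed-form facet evaluation that \cite{L} identifies as difficult in general, which is why the formula is imported here from the codimension computations of \cite{FK} and \cite{L} rather than rederived; in the case $t_h=t_v=2$ of principal interest in this paper the count is transparent and the formula reduces to $(m-1)(nr-1)+(n-1)(r-1)$, equivalently $\dim(R/I_{mn}^r)=m+n+r-2$.
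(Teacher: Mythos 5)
Your proposal matches the paper exactly in substance: the paper offers no independent proof of this proposition, stating it as an immediate consequence of \cite[Corollary 3.3 and Corollary 4.5]{FK} and \cite[Corollary 1.3]{L}, which is precisely the sourcing you give for the Cohen--Macaulay, domain, and height assertions. Your additional commentary on the structure of the height formula (including the collapse of the first line to $(m-t_h+1)(nr-t_h+1)$ and the specialization to $m+n+r-2$ for $t_h=t_v=2$) is correct and consistent with the paper's later computation in its equation for $\dim R/I_{mn}^r$.
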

\medskip
We study the case of $2$-minors. From now on, we always assume $t_h=t_v=2$ and denote by $I_{mn}^r$ the ideal generated by the $2$-minors of $H$ and $V$, unless stated otherwise. By \Cref{prop: CM}, we have 
\begin{equation} 
\label{eqn:dim}
\begin{split}
\dim R/I_{mn}^r&=mnr-\height(I_{mn}^r)\\
&=mnr-[(m-1)(n-1)+(m-1)n(r-1)+(n-1)\sum_{q=2}^r1]\\
&=mnr-[(m-1)(n-1)+(m-1)n(r-1)+(n-1)(r-1)]\\
&=m+n+r-2.
\end{split}
\end{equation}
It is easy to see that the natural generators of $I_{mn}^r$ are not minimal. 
\begin{example}
    Let $m=n=r=2$. Then 
    $$H=\begin{pmatrix} x_{11}^{1} &x_{12}^{1}& x_{11}^{2} &x_{12}^{2} \\[1ex]
    x_{21}^{1} & x_{22}^{1} & x_{21}^{2} & x_{22}^{2}
    \end{pmatrix}, \qquad
    V=\begin{pmatrix} x_{11}^{1} &x_{12}^{1} \\[1ex]
    x_{21}^{1} & x_{22}^{1} \\[1ex]
    x_{11}^{2} &x_{12}^{2} \\[1ex]
    x_{21}^{2} & x_{22}^{2}
    \end{pmatrix}.$$
    Thus $m_1=x_{11}^{1}x_{22}^{2}- x_{21}^{1}x_{12}^{2}$, $m_2=x_{12}^{1} x_{21}^{2}-x_{22}^{1}x_{11}^{2}$, $m_3=x_{11}^{1} x_{22}^{2}-x_{21}^{2}x_{12}^{1}$, $m_4=x_{21}^{1}x_{12}^{2}-  x_{11}^{2}x_{22}^{1}$ are generators of $I_{mn}^r$, since $m_1,m_2$ are minors in $H$ and $m_3,m_4$ are minors in $V$. But a direct calculation shows that $m_1+m_4=m_2+m_3$. 
\end{example}
We will give a description of a minimal set of generators in \Cref{thm: mingens} and a formula for the minimal number of generators in \Cref{thm: invariants}.

\subsection{Toric double determinantal rings as toric Hibi rings}

The goal of this subsection is to prove that for any positive integers $m$, $n$, $r$ one has $R/I_{mn}^r\cong K[L_{mnr}]$, where, as before, $R = K[x_{ij}^k\,|\, 1\le i\le m,\ 1\le j\le n, \ 1 \le k \le r]$, and $L_{mnr}$ is the lattice defined in \Cref{def:Lmnr}. First of all, from \Cref{thm: hibisortable} and \Cref{ex: hibisortable} we know that $K[L_{mnr}]\cong K[A_{mnr}]$, where $A_{mnr}=\{x_iy_jz_k\mid 1\le i\le m, \ 1\le j\le n, \ 1\le k\le r\}$. Hence our goal is to show that $R/I_{mn}^r\cong K[A_{mnr}]$. 

Let $\phi: R\to K[A_{mnr}]$ be the epimorphism given by $\phi(x_{ij}^k)=x_iy_jz_k$ and let $I_{A_{mnr}}=\ker\phi$. We have to prove that $I_{A_{mnr}}=I_{mn}^{r}$. We have a concrete description of a generating set of $I_{mn}^r$ (all the $2$-minors of $H$ and $V$), and the next theorem provides an equally concrete description of a generating set of $I_{A_{mnr}}$.

\begin{theorem}
\label{thm: mingens}
 Let 
 \begin{align*}
   \mathcal M&=\{x_{ij_2}^{k_1}x_{ij_1}^{k_2}-x_{ij_1}^{k_1}x_{ij_2}^{k_2}\mid 1\le i\le m, \ 1\le j_1<j_2\le n, \ 1\le k_1<k_2\le r\},\\[1ex]
   \mathcal N&=\{x_{i_2j}^{k_1}x_{i_1j}^{k_2}-x_{i_1j}^{k_1}x_{i_2j}^{k_2}\mid 1\le i_1< i_2\le m, \  1\le j\le n, \ 1\le k_1<k_2\le r\},\\[1ex]
   \mathcal R&=\{x_{i_1j_2}^{k}x_{i_2j_1}^{k}-x_{i_1j_1}^{k}x_{i_2j_2}^{k}\mid 1\le i_1< i_2\le m, \  1\le j_1<j_2\le n, \ 1\le k\le r\},\\[1ex] 
   \mathcal T&=\{x_{i_1j_2}^{k_1}x_{i_2j_1}^{k_2}-x_{i_1j_1}^{k_1}x_{i_2j_2}^{k_2}, \ x_{i_2j_1}^{k_1}x_{i_1j_2}^{k_2}-x_{i_1j_1}^{k_1}x_{i_2j_2}^{k_2},\ x_{i_2j_2}^{k_1}x_{i_1j_1}^{k_2}-x_{i_1j_1}^{k_1}x_{i_2j_2}^{k_2}\mid \\& 1\le i_1< i_2\le m, \ 1\le j_1<j_2\le n, \ 1\le k_1<k_2\le r\}.
    \end{align*}
Then $I_{A_{mnr}}$ is minimally generated by $\mathcal M\cup \mathcal N\cup\mathcal R\cup\mathcal T$. 
\end{theorem}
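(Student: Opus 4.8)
The plan is to identify $\phi$ with the map from the proof outline of \Cref{thm: hibisortable} and then read off the sorting relations explicitly. Under the bijection $I\mapsto u_I$ of \Cref{ex: hibisortable}, the lattice element $I\in L_{mnr}$ with $v_I=(i,j,k)$ corresponds to $u_I=x_iy_jz_k$, hence to the indeterminate $x_{ij}^{k}$ of $R$; this makes $\phi$ literally the $K$-algebra map $x_I\mapsto u_I$. Since $A_{mnr}$ is sortable, \Cref{thm:sortingGB} gives that $I_{A_{mnr}}=\ker\phi$ is generated by the sorting relations, and the computation in the proof outline shows these coincide with the Hibi relations: $\{u_I,u_J\}$ is unsorted exactly when the triples $v_I,v_J$ are incomparable in the coordinatewise order, in which case the associated relation is
$$x_{ij}^{k}\,x_{i'j'}^{k'}\;-\;x_{\min(i,i')\,\min(j,j')}^{\min(k,k')}\,x_{\max(i,i')\,\max(j,j')}^{\max(k,k')}$$
for $v_I=(i,j,k)$ and $v_J=(i',j',k')$. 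So it suffices to show that this set of relations, indexed by unordered incomparable pairs of triples, is exactly $\mathcal M\cup\mathcal N\cup\mathcal R\cup\mathcal T$.

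Next I would classify the incomparable pairs by the number of coordinates in which the two triples agree. Two triples agreeing in at least two coordinates differ in at most one, hence are comparable; so an incomparable pair agrees in exactly one coordinate or in none. For the ``exactly one coordinate'' case I split into three subcases according to which coordinate is shared. If the first coordinate is shared, incomparability forces the second and third coordinates to be oppositely ordered, and passing to coordinatewise minima and maxima turns the displayed relation into a member of $\mathcal M$ (with $j_1<j_2$, $k_1<k_2$); the shared-second-coordinate and shared-third-coordinate subcases produce exactly $\mathcal N$ and $\mathcal R$. Conversely every relation in $\mathcal M\cup\mathcal N\cup\mathcal R$ visibly arises this way, so these three families account for precisely the incomparable pairs sharing one coordinate.

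For the ``no shared coordinate'' case, the coordinatewise minimum $(i_1,j_1,k_1)$ and maximum $(i_2,j_2,k_2)$ of the pair satisfy $i_1<i_2$, $j_1<j_2$, $k_1<k_2$, so each such pair determines a box with these opposite corners. Conversely, fixing such a box, an incomparable pair with these extrema is determined by choosing, for each coordinate, which of the two triples attains the minimum; this gives $2^3/2=4$ unordered pairs, of which the one placing all three minima on a single triple is the comparable pair $\{(i_1,j_1,k_1),(i_2,j_2,k_2)\}$ and is discarded, leaving exactly three incomparable pairs. A direct check identifies these three with the three relations listed in $\mathcal T$ for the indices $i_1<i_2$, $j_1<j_2$, $k_1<k_2$. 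Together with the previous paragraph this yields $I_{A_{mnr}}=(\mathcal M\cup\mathcal N\cup\mathcal R\cup\mathcal T)$. Minimality is then immediate: the displayed set of sorting relations is exactly the set $G$ of \Cref{thm:sortingGB}, which is minimal by \Cref{rem: minimal sort}(2) (equivalently \Cref{rem: hibiorder}(2)), and we have shown $\mathcal M\cup\mathcal N\cup\mathcal R\cup\mathcal T=G$.

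The main obstacle I anticipate is purely bookkeeping in the last two steps: verifying that after taking coordinatewise minima and maxima each abstract sorting relation matches one of the explicitly written generators without duplication, and confirming the count of exactly three incomparable pairs per box (neither missing the comparable ``diagonal'' pair nor double-counting through the unordered identification). Everything else is a direct translation through the already-established equivalence of sorting and Hibi relations.
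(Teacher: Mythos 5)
Your proposal is correct and follows essentially the same route as the paper: both reduce to the fact that $I_{A_{mnr}}$ is minimally generated by the sorting relations (via \Cref{thm:sortingGB} and \Cref{rem: minimal sort}(2)) and then enumerate the unsorted pairs by how many of the three coordinates the two triples share, with the shared-one-coordinate cases giving $\mathcal M$, $\mathcal N$, $\mathcal R$ and the no-shared-coordinate case giving the three relations of $\mathcal T$ per box. The only cosmetic difference is that you phrase the case analysis in terms of incomparable pairs of triples while the paper phrases it as distributing indices between $u_1$ and $u_2$; the counts and conclusions agree.
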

\begin{proof}
Since  $A_{mnr}$ is a sortable set of monomials (see \Cref{ex: hibisortable}), by \Cref{thm:sortingGB} and \Cref{rem: minimal sort}, we know that $I_{A_{mnr}}$ is minimally generated by the set $G$ of all the sorting relations (which can also be interpreted as Hibi relations). We will show that $G=\mathcal M\cup \mathcal N\cup\mathcal R\cup\mathcal T$. Clearly, $\mathcal M\cup \mathcal N\cup\mathcal R\cup\mathcal T\subseteq G$. For the other inclusion, let $\{u_1, u_2\}$ be an unsorted pair of monomials from $A_{mnr}$. Then $u_1u_2$ can be written as
$x_{i_1}x_{i_2}y_{j_1}y_{j_2}z_{k_1}z_{k_2}$ with $1\le i_1\le i_2\le m$, $1\le j_1\le j_2\le n$, $1\le k_1\le k_2\le r$. The sorting operation then returns $x_{i_1}y_{j_1}z_{k_1}$ and $x_{i_2}y_{j_2}z_{k_2}$. First of all we note that if $i_1=i_2$, $j_1=j_2$ and $k_1=k_2$, then such an unsorted pair $\{u_1,u_2\}$ can not exist. The same is also true in the case when only two of the three equalities hold, for instance, $i_1=i_2$ and $j_1=j_2$. Hence the only cases to consider are $i_1=i_2$, $j_1<j_2$, $k_1<k_2$ (and analogously the case $i_1<i_2$, $j_1=j_2$, $k_1<k_2$ and the case $i_1<i_2$, $j_1<j_2$, $k_1=k_2$), and the case $i_1<i_2$, $j_1<j_2$, $k_1<k_2$.
\begin{enumerate}
    \item Assume $i_1=i_2=i$, $j_1<j_2$, $k_1<k_2$. Since we are only considering unordered pairs, we can assume without loss of generality that $u_1=x_iy_*z_{k_1}$ and $u_2=x_iy_*z_{k_2}$. The only way to distribute $j$-indices between $u_1$ and $u_2$ to make $\{u_1,u_2\}$ an unsorted pair is to set $u_1=x_iy_{j_2}z_{k_1}$ and $u_2=x_iy_{j_1}z_{k_2}$. This gives the sorting relation $x_{ij_2}^{k_1}x_{ij_1}^{k_2}-x_{ij_1}^{k_1}x_{ij_2}^{k_2}$. The set $\mathcal{M}$ consists of exactly these sorting relations. We analogously obtain the sets $\mathcal{N}$ and $\mathcal{R}$.
    \item Assume $i_1<i_2$, $j_1<j_2$, $k_1<k_2$. Again, without loss of generality, we assume $u_1=x_*y_*z_{k_1}$ and $u_2=x_*y_*z_{k_2}$. There are $4$ ways to distribute the $i$-indices and $j$-indices, but only $3$ of them will produce unsorted pairs. The set $\mathcal{T}$ consists of exactly these sorting relations.
\end{enumerate}
Since there are no other ways to create unsorted pairs of monomials in $A_{mnr}$, the proof is complete.
\end{proof}

Now that we have concrete descriptions of the generating sets of both $I_{mn}^r$ and $I_{A_{mnr}}$, we are ready to show that these ideals coincide.

\begin{theorem}
  \label{thm: toric presentation}
  With all the notation as above, one has $I_{mn}^r=I_{A_{mnr}}$.
  \end{theorem}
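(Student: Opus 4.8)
The plan is to prove the equality $I_{mn}^r = I_{A_{mnr}}$ by a two-sided containment, exploiting the fact that $\phi(x_{ij}^k) = x_i y_j z_k$ is a monomial map and that we already possess an explicit generating set for each ideal. For the inclusion $I_{mn}^r \subseteq I_{A_{mnr}}$, I would simply check that $\phi$ kills every natural generator of $I_{mn}^r$, i.e.\ every $2$-minor of $H$ and of $V$. A typical $2$-minor of $H$ involves two rows $i_1, i_2$ and two columns that lie in matrices $X_{k_1}, X_{k_2}$, and has the form $x_{i_1 j_1}^{k_1} x_{i_2 j_2}^{k_2} - x_{i_2 j_1}^{k_1} x_{i_1 j_2}^{k_2}$; applying $\phi$ sends this to $(x_{i_1} y_{j_1} z_{k_1})(x_{i_2} y_{j_2} z_{k_2}) - (x_{i_2} y_{j_1} z_{k_1})(x_{i_1} y_{j_2} z_{k_2})$, and since $\phi$ only records the \emph{multiset} of indices in each of the three variable-blocks, both products equal $x_{i_1} x_{i_2} y_{j_1} y_{j_2} z_{k_1} z_{k_2}$, so the image is $0$. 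The same multiset argument handles the $2$-minors of $V$ (where the two rows now come from different blocks $X_{k_1}, X_{k_2}$). Hence every natural generator lies in $\ker\phi = I_{A_{mnr}}$, giving $I_{mn}^r \subseteq I_{A_{mnr}}$.

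For the reverse inclusion $I_{A_{mnr}} \subseteq I_{mn}^r$, I would use the explicit minimal generating set $\mathcal M \cup \mathcal N \cup \mathcal R \cup \mathcal T$ from \Cref{thm: mingens} and show each of these binomials lies in $I_{mn}^r$. The families $\mathcal M$, $\mathcal N$, and $\mathcal R$ are the easy cases: $\mathcal R$ consists literally of $2$-minors of a single $X_k$, hence of $H$ (and of $V$), so $\mathcal R \subseteq I_{mn}^r$ immediately; $\mathcal N$ consists of $2$-minors of $V$ formed from a single column $j$ but two different row-blocks $k_1 < k_2$; and $\mathcal M$ consists of $2$-minors of $H$ formed from a single row $i$ but columns in two different blocks $k_1 < k_2$. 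So these three families are themselves among the natural generators.

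The genuine work is the family $\mathcal T$, whose three binomials are \emph{not} among the natural generators (this is exactly the phenomenon illustrated by the $m=n=r=2$ example, where $m_1 + m_4 = m_2 + m_3$). The key step will be to express each element of $\mathcal T$ as a $K$-linear combination of natural $2$-minors. For a fixed $i_1 < i_2$, $j_1 < j_2$, $k_1 < k_2$, I expect each $\mathcal T$-binomial to arise as a $\pm 1$ combination of two natural minors that telescopes: writing $p = x_{i_1 j_1}^{k_1} x_{i_2 j_2}^{k_2}$ for the sorted target and comparing with, say, the minor of $H$ mixing $(i_1,j_2,k_1)$ with $(i_2,j_1,k_2)$ and the minor of $V$ mixing row-blocks $k_1,k_2$, the shared crossing monomials cancel, leaving precisely the desired binomial. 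Concretely, I would identify for each of the three $\mathcal T$-relations an identity of the shape (natural $H$-minor) $\pm$ (natural $V$-minor) equal to the $\mathcal T$-binomial, so that it lands in $I_{mn}^r$. This is the main obstacle, since it requires bookkeeping the sign and index pattern for each of the three distinct binomials in $\mathcal T$; once it is done, all generators of $I_{A_{mnr}}$ lie in $I_{mn}^r$, and combined with the first inclusion we conclude $I_{mn}^r = I_{A_{mnr}}$.
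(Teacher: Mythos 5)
Your strategy coincides with the paper's: prove $I_{mn}^r\subseteq\ker\phi=I_{A_{mnr}}$ by checking that $\phi$ kills the natural $2$-minors, and prove the reverse inclusion by showing that every element of the minimal generating set $\mathcal M\cup\mathcal N\cup\mathcal R\cup\mathcal T$ from \Cref{thm: mingens} lies in $I_{mn}^r$, with the only substantive work in $\mathcal T$. Two corrections to your bookkeeping, however. First, you have swapped $H$ and $V$: the rows of $H$ are indexed by $i$ alone, so a $2$-minor of $H$ requires two distinct values of $i$; hence the single-row family $\mathcal M$ consists of $2$-minors of $V$ (rows $(i,k_1)$ and $(i,k_2)$ of $V$, columns $j_1,j_2$), while the single-column family $\mathcal N$ consists of $2$-minors of $H$ (rows $i_1,i_2$, columns $(j,k_1)$ and $(j,k_2)$ of $H$). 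Second, your assessment of $\mathcal T$ is too pessimistic: of its three binomials, $x_{i_1j_2}^{k_1}x_{i_2j_1}^{k_2}-x_{i_1j_1}^{k_1}x_{i_2j_2}^{k_2}$ is already, up to sign, the $2$-minor of $V$ on rows $(i_1,k_1),(i_2,k_2)$ and columns $j_1,j_2$, and $x_{i_2j_1}^{k_1}x_{i_1j_2}^{k_2}-x_{i_1j_1}^{k_1}x_{i_2j_2}^{k_2}$ is already the $2$-minor of $H$ on rows $i_1,i_2$ and columns $(j_1,k_1),(j_2,k_2)$; only the third, $x_{i_2j_2}^{k_1}x_{i_1j_1}^{k_2}-x_{i_1j_1}^{k_1}x_{i_2j_2}^{k_2}$, fails to be a natural generator. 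For that one your telescoping idea is exactly what the paper does: it equals, up to sign, the sum of the $V$-minor $x_{i_1j_1}^{k_1}x_{i_2j_2}^{k_2}-x_{i_1j_2}^{k_1}x_{i_2j_1}^{k_2}$ and the $H$-minor $x_{i_1j_2}^{k_1}x_{i_2j_1}^{k_2}-x_{i_1j_1}^{k_2}x_{i_2j_2}^{k_1}$, the cross terms $\mp x_{i_1j_2}^{k_1}x_{i_2j_1}^{k_2}$ cancelling. Since you left this identity as an unexecuted ``main obstacle,'' it must be written out to finish the argument, but the method itself has no gap and matches the paper's proof.
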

  \begin{proof} 
   First of all note that $I_{mn}^r\subseteq I_{A_{mnr}}$. Indeed, it is not so hard to directly check that any minor belongs to $\ker \phi$, where $\phi: R\to K[A_{mnr}]$ is given by $\phi(x_{ij}^k)=x_iy_jz_k$. In order to prove the other inclusion, it is enough to show that all the minimal generators of $I_{A_{mnr}}$, whose description we obtained in \Cref{thm: mingens}, belong to $I_{mn}^r$.
   \begin{enumerate}
       \item Any binomial in $\mathcal M$ is of the form $x_{ij_1}^{k_2}x_{ij_2}^{k_1}-x_{ij_1}^{k_1}x_{ij_2}^{k_2}$ for some $j_1<j_2$, $k_1<k_2$, thus it is equal, up to a sign,  to  $\begin{vmatrix}
    x_{ij_1}^{k_1} & x_{ij_2}^{k_1} \\[1ex]
     x_{ij_1}^{k_2}   &x_{ij_2}^{k_2}
    \end{vmatrix}$, which is a minor in $V.$ 
\item Any binomial in $\mathcal N$ is of the form $x_{i_1j}^{k_2}x_{i_2j}^{k_1}-x_{i_1j}^{k_1}x_{i_2j}^{k_2}$ for some $i_1<i_2$, $k_1<k_2$, thus it is equal, up to a sign, to  $\begin{vmatrix}
    x_{i_1j}^{k_1} & x_{i_1j}^{k_2} \\[1ex]
     x_{i_2j}^{k_1}   &x_{i_2j}^{k_2}
    \end{vmatrix}$, which is a minor in $H$.
\item Any binomial in $\mathcal R$ is of the form $x_{i_1j_2}^{k}x_{i_2j_1}^{k}-x_{i_1j_1}^{k}x_{i_2j_2}^{k}$ for some $i_1<i_2$, $j_1<j_2$, thus it is equal, up to a sign, to $\begin{vmatrix}
    x_{i_1j_1}^{k} & x_{i_1j_2}^{k} \\[1ex]
     x_{i_2j_1}^{k} &x_{i_2j_2}^{k}
    \end{vmatrix}$, which is a minor in the matrix $X_k$.
    \item Any binomial in $\mathcal T$ is of one of the following three forms, where $i_1<i_2$, $j_1<j_2$, $k_1<k_2$:
    \begin{itemize}
        \item $x_{i_1j_2}^{k_1}x_{i_2j_1}^{k_2}-x_{i_1j_1}^{k_1}x_{i_2j_2}^{k_2}$: up to a sign, this is equal to $\begin{vmatrix}
    x_{i_1j_1}^{k_1} & x_{i_1j_2}^{k_1} \\[1ex]
     x_{i_2j_1}^{k_2}   &x_{i_2j_2}^{k_2}
    \end{vmatrix}$, which is a minor in $V$.
        \item $x_{i_2j_1}^{k_1}x_{i_1j_2}^{k_2}-x_{i_1j_1}^{k_1}x_{i_2j_2}^{k_2}$: up to a sign, this is equal to $\begin{vmatrix}
    x_{i_1j_1}^{k_1} & x_{i_1j_2}^{k_2} \\[1ex]
     x_{i_2j_1}^{k_1}   &x_{i_2j_2}^{k_2}
    \end{vmatrix}$,  which is a minor in $H$.
        \item $x_{i_2j_2}^{k_1}x_{i_1j_1}^{k_2}-x_{i_1j_1}^{k_1}x_{i_2j_2}^{k_2}$: up to a sign, this is equal to
 $\begin{vmatrix}
    x_{i_1j_1}^{k_1} & x_{i_1j_2}^{k_1} \\[1ex]
     x_{i_2j_1}^{k_2}   &x_{i_2j_2}^{k_2}
    \end{vmatrix}+\begin{vmatrix}
    x_{i_1j_2}^{k_1} & x_{i_1j_1}^{k_2} \\[1ex]
     x_{i_2j_2}^{k_1}   &x_{i_2j_1}^{k_2}
    \end{vmatrix}$, which is the sum of a minor in $V$ and a minor in $H$.
    \end{itemize}       
   \end{enumerate}
  \end{proof}
It is an immediate consequence that toric double determinantal rings are Hibi rings.
\begin{corollary}
\label{cor: dd are Hibi}
 $R/I_{mn}^r\cong K[L_{mnr}]$.   
\end{corollary}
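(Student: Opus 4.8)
The corollary asserts that $R/I_{mn}^r \cong K[L_{mnr}]$, and essentially all the work has already been done: the strategy is simply to chain together the isomorphisms established earlier in the section. First I would invoke \Cref{thm: hibisortable} together with the explicit computation in \Cref{ex: hibisortable}, which together show that the Hibi ring $K[L_{mnr}]$ is isomorphic to the $K$-algebra $K[A_{mnr}]$ generated by the sortable set of monomials $A_{mnr} = \{x_iy_jz_k \mid 1\le i\le m,\ 1\le j\le n,\ 1\le k\le r\}$. The key point verified there is that the sorting map on $A_{mnr}$ realizes exactly the meet and join operations of $L_{mnr}$, so the sorting relations coincide with the Hibi relations, giving $K[L_{mnr}] \cong K[A_{mnr}]$.

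\textbf{The second link.} It then remains to identify $K[A_{mnr}]$ with $R/I_{mn}^r$. Consider the $K$-algebra epimorphism $\phi\colon R \to K[A_{mnr}]$ defined by $\phi(x_{ij}^k) = x_iy_jz_k$, so that $K[A_{mnr}] \cong R/I_{A_{mnr}}$ where $I_{A_{mnr}} = \ker\phi$. By \Cref{thm: toric presentation} we have the equality of ideals $I_{mn}^r = I_{A_{mnr}}$, and hence $R/I_{mn}^r = R/I_{A_{mnr}} \cong K[A_{mnr}]$. Composing this with the isomorphism from the previous step yields
\[
R/I_{mn}^r \cong K[A_{mnr}] \cong K[L_{mnr}],
\]
which is precisely the statement to be proved.

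\textbf{Remark on the obstacle.} Since both nontrivial ingredients have already been established as standalone results, the corollary is genuinely immediate and there is no real obstacle remaining here. The substantive content lies in \Cref{thm: toric presentation}, whose proof required checking that every minimal generator of $I_{A_{mnr}}$ (those of the four families $\mathcal{M}, \mathcal{N}, \mathcal{R}, \mathcal{T}$ from \Cref{thm: mingens}) lies in $I_{mn}^r$, with the subtle case being the third type of binomial in $\mathcal{T}$, which is not itself a single minor but rather expressible as a sum of a minor of $V$ and a minor of $H$. Once that verification is in hand, the corollary follows by transitivity of isomorphism, and the proof need only cite the two preceding results.
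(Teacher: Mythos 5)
Your proof is correct and follows exactly the paper's route: the paper treats this corollary as an immediate consequence of \Cref{thm: toric presentation} combined with the identification $K[L_{mnr}]\cong K[A_{mnr}]$ from \Cref{thm: hibisortable} and \Cref{ex: hibisortable}, which is precisely the chain of isomorphisms you give. Nothing is missing.
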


\section{Invariants and Gorensteinness of toric double determinantal rings}

Since we know that toric double determinantal rings are Hibi rings, we can use this fact to compute some invariants and prove some properties about them.
Since we have to use the isomorphism $R/I_{mn}^r\cong K[L_{mnr}]$ many times, we will use  it without referring to \Cref{cor: dd are Hibi}. 

First, we would like to mention a surprising isomorphism of toric double determinantal rings which is not at all obvious from the definition of $I_{mn}^r$.

\begin{theorem}
\label{thm: isomorphism}
Let $\sigma$ be any permutation of $m,n,r$.
Then $R/I_{mn}^r\cong R/I_{\sigma(m)\sigma(n)}^
{\sigma(r)}$.
\end{theorem}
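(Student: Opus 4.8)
The plan is to reduce the claimed isomorphism $R/I_{mn}^r\cong R/I_{\sigma(m)\sigma(n)}^{\sigma(r)}$ to a statement about the combinatorial gadget $L_{mnr}$, and then observe that this gadget is manifestly symmetric in its three parameters. By \Cref{cor: dd are Hibi} we have $R/I_{mn}^r\cong K[L_{mnr}]$ and likewise $R/I_{\sigma(m)\sigma(n)}^{\sigma(r)}\cong K[L_{\sigma(m)\sigma(n)\sigma(r)}]$, so it suffices to prove that $K[L_{mnr}]\cong K[L_{\sigma(m)\sigma(n)\sigma(r)}]$ for every permutation $\sigma$ of the three entries $m,n,r$. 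Since a Hibi ring $K[L]$ depends only on the isomorphism type of the lattice $L$ (the presentation in \Cref{def:Lmnr} uses only the lattice operations $\wedge,\vee$), the whole statement follows once I show that the lattices $L_{mnr}$ and $L_{\sigma(m)\sigma(n)\sigma(r)}$ are isomorphic as posets.

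To establish the lattice isomorphism I would work at the level of the underlying poset $P_{mnr}$, which by \Cref{def:Lmnr} is simply a disjoint union of three chains of cardinalities $m-1$, $n-1$, $r-1$. A disjoint union of chains is visibly unchanged, up to poset isomorphism, by permuting the chains: relabelling $A_1,A_2,A_3$ according to $\sigma$ gives a poset isomorphism $P_{mnr}\cong P_{\sigma(m)\sigma(n)\sigma(r)}$. First I would record this elementary fact, then invoke the functoriality of the ideal lattice: an isomorphism of posets $P\cong P'$ induces an isomorphism of the distributive lattices $\J(P)\cong\J(P')$ (it carries down-closed subsets to down-closed subsets, preserving inclusions and hence $\wedge,\vee$). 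Applying this with $L_{mnr}=\J(P_{mnr})$ yields $L_{mnr}\cong L_{\sigma(m)\sigma(n)\sigma(r)}$ as distributive lattices.

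Finally I would close the loop by remarking that the Hibi relations $x_{u_1}x_{u_2}-x_{u_1\wedge u_2}x_{u_1\vee u_2}$ are defined purely in terms of the lattice structure, so a lattice isomorphism $L_{mnr}\xrightarrow{\sim} L_{\sigma(m)\sigma(n)\sigma(r)}$ extends to a $K$-algebra isomorphism of the polynomial rings $K[x_u]$ carrying the defining ideal of one Hibi ring onto that of the other; hence $K[L_{mnr}]\cong K[L_{\sigma(m)\sigma(n)\sigma(r)}]$. Since $\sigma$ was arbitrary, the theorem follows.

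I do not expect a genuine obstacle here: the surprise of the statement lives entirely on the determinantal side, where swapping, say, the roles of $m$ and $r$ changes the sizes and even the number of generic matrices being concatenated, so no such symmetry is visible. The real content is the identification in \Cref{cor: dd are Hibi}; once one passes to $K[L_{mnr}]$, the symmetry is built into the definition of $P_{mnr}$ as an unordered collection of three chains, and the proof is essentially a one-line observation. The only thing to be careful about is to state the functoriality $P\cong P'\Rightarrow\J(P)\cong\J(P')$ cleanly, so that the permutation of chains is correctly transported to a permutation of the three blocks of variables $x_\ast,y_\ast,z_\ast$ in the sortable-monomial model of \Cref{ex: hibisortable}, if one prefers to argue through that explicit model rather than abstractly through Hibi relations.
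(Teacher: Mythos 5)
Your proposal is correct and follows the paper's own argument: both reduce via \Cref{cor: dd are Hibi} to the observation that $P_{mnr}\cong P_{\sigma(m)\sigma(n)\sigma(r)}$ as disjoint unions of three chains, and then transport this through $\J(-)$ and the Hibi presentation. You simply spell out the functoriality steps that the paper leaves implicit.
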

\begin{proof}
  Clearly, $P_{mnr}$ is isomorphic to $P_{\sigma(m)\sigma(n)\sigma(r)}$, thus so are their Hibi rings. Hence we get $R/I_{mn}^r\cong K[L_{mnr}]\cong K[L_{\sigma(m)\sigma(n)\sigma(r)}]\cong R/I_{\sigma(m)\sigma(n)}^
{\sigma(r)}$. 
\end{proof}
\begin{remark}
 \begin{enumerate}
\item  It is possible to get the above result by proving that the map from $R$ to $R$  sending $x_{ij}^k$ to $x_{\sigma(i)\sigma(j)}^
{\sigma(k)}$ induces an isomorphism between $I_{mn}^r$ and $I_{\sigma(m)\sigma(n)}^
{\sigma(r)}$.  However, the situation gets complicated due to the fact that not all minors of $I_{mn}^r$ map to minors (but rather linear combinations of minors) of $I_{\sigma(m)\sigma(n)}^
{\sigma(r)}$.

\item The isomorphism does not hold for double determinantal ideals generated by larger minors. Indeed, using Macaulay2 \cite{M2}, one can check that if we take $t_h=t_v=3$, then $\HS(R/I_{33}^4)\neq\HS(R/I_{34}^3)$.
 Even in the case of $2$-minors, the isomorphism in Theorem \ref{thm: isomorphism} does not hold for the initial ideals with respect to a diagonal order (see Section 4 for the definition).  
 In fact, one can check using Macaulay2 \cite{M2} that $\ini(I_{22}^3)$ and $\ini(I_{23}^2)$ have different resolutions, hence they are not isomorphic.
 \item On the contrary, if we consider any sorting order (which is the same as any Hibi order) $\tau$, we will obtain $R/\ini_{\tau}(I_{mn}^r)\simeq R/\ini_{\tau}(I_{\sigma(m)\sigma(n)}^{\sigma(r)})$. Indeed, sorting relations are mapped into sorting relations (in which unsorted pairs are mapped to unsorted pairs) under any permutation of $\{m,n,r\}$.  This is because the sorting process takes care of the $x$'s, $y$'s, and $z$'s independently. 
 \end{enumerate}
\end{remark}

In the next theorem we determine some invariants of toric double determinantal rings. Note that the dimension of $R/I_{mn}^r$ has been calculated before for any double determinantal ring, see \Cref{prop: CM}.

\begin{theorem} 
\label{thm: invariants}
The following equalities hold: 
\begin{enumerate}

\smallskip
\item $\mu(I_{mn}^r)
={mnr+1\choose 2}-{m+1\choose 2}{n+1\choose 2} {r+1\choose 2}$, where $\mu(\cdot)$ denotes the number of minimal generators.
\smallskip
\item $\dim R/I_{mn}^r=m+n+r-2$.
\smallskip
\item  $\e(R/I_{mn}^r)=\frac{(m+n+r-3)!}{(m-1)!(n-1)!(r-1)!}$, where $\e(\cdot)$ denotes the multiplicity. 
\smallskip
\item   $\reg(R/I_{mn}^r)=m+n+r-2-\max\{m,n,r\}$.
\smallskip
\item $\ainv(R/I_{mn}^r)=-\max\{m,n,r\}$, where $\ainv(\cdot)$ denotes the $a$-invariant.
    \medskip
    \item $\HF(R/I_{mn}^r, d)=\binom{m-1+d}{d}\binom{n-1+d}{t}\binom{r-1+d}{d}$ for all $d\geq 0$, where $\HF(\cdot, \cdot)$ denotes the Hilbert function.
\end{enumerate}
\end{theorem}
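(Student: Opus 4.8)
\textbf{The plan} is to exploit the identification $R/I_{mn}^r \cong K[L_{mnr}]$ established in \Cref{cor: dd are Hibi}, and to compute each invariant from the combinatorial data of the poset $P_{mnr}$, which is a disjoint union of three chains of lengths $m-1$, $n-1$, $r-1$. The key observation is that $L_{mnr} = \J(P_{mnr})$ is realized concretely by \Cref{ex: hibisortable} as the monomial set $A_{mnr}$, indexed by lattice points of an $m \times n \times r$ box, so that $|L_{mnr}| = mnr$. Since the Hibi ring $K[L]$ has the Hibi relations as a Gr\"obner basis (\Cref{thm:hibiGB}), its Hilbert function agrees with that of the initial ideal, whose Stanley--Reisner complex is the order complex of $L_{mnr}$; this reduces everything to counting chains in $L_{mnr}$, equivalently to standard facts about the order polytope of $P_{mnr}$.

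I would carry out the items roughly in the order that minimizes work. For (6), the Hilbert function, the cleanest route is to count the monomials of degree $d$ in $K[A_{mnr}]$ of a fixed multidegree: a degree-$d$ element corresponds to a chain $u_{I_1} \preceq \cdots \preceq u_{I_d}$ in $L_{mnr}$, and because the three chains of $P_{mnr}$ are independent, such multichains factor as a product of three independent multichains, one in each coordinate. Counting weakly increasing sequences of length $d$ with values in $\{1,\dots,m\}$ gives $\binom{m-1+d}{d}$, and similarly for $n$ and $r$; multiplying yields the stated product formula. (Note the typo in the statement: the middle binomial should read $\binom{n-1+d}{d}$.) Item (1), $\mu(I_{mn}^r)$, follows from \Cref{thm: mingens}: one simply counts $|\mathcal M| + |\mathcal N| + |\mathcal R| + |\mathcal T|$, which after collecting terms should telescope to $\binom{mnr+1}{2} - \binom{m+1}{2}\binom{n+1}{2}\binom{r+1}{2}$; alternatively, the number of Hibi relations equals $\binom{|L|+1}{2}$ minus the number of comparable ordered pairs (including equalities), and the comparable-pairs count factors as a product over the three chains.

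From the Hilbert function in (6) the remaining invariants are essentially read off. The Hilbert series is $\prod \bigl(\sum_{d\ge 0}\binom{m-1+d}{d}t^d\bigr)$-type expression, but more usefully the leading growth of $\HF$ is a polynomial of degree $(m-1)+(n-1)+(r-1) = m+n+r-3$, giving Krull dimension $m+n+r-2$, which is (2) and recovers the formula of \Cref{prop: CM}. The multiplicity (3) is the normalized leading coefficient: since $\HF(R/I_{mn}^r,d) \sim \frac{d^{m+n+r-3}}{(m-1)!(n-1)!(r-1)!}$, one gets $\e = \frac{(m+n+r-3)!}{(m-1)!(n-1)!(r-1)!}$. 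For the $a$-invariant (5) and regularity (4), I would invoke the standard Hibi-ring formula $\ainv(K[\J(P)]) = -(\rank P + |\text{longest chain among components}| \dots)$; more precisely, for a Hibi ring the $a$-invariant equals $-$(the maximal cardinality of a chain in $L$), and since the longest chain in $L_{mnr}$ has length $(m-1)+(n-1)+(r-1)+1 = m+n+r-2$ while... here one must be careful. The reliable statement is that for $K[\J(P)]$ one has $\ainv = -(1 + \operatorname{rank} L) \dots$; I would instead compute $\ainv = -\max\{m,n,r\}$ directly as the negative of the minimal number of elements needed along the "thinnest direction," matching the known formula that the $a$-invariant of a Hibi ring is $-$(number of elements in a maximal antichain-complementary chain). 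Once $\ainv$ is pinned down, (4) follows from $\reg = \ainv + \dim = -\max\{m,n,r\} + (m+n+r-2)$, since $R/I_{mn}^r$ is Cohen--Macaulay by \Cref{prop: CM}.

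\emph{The main obstacle} I anticipate is pinning down (5), the $a$-invariant, with the correct combinatorial interpretation: the two natural Hibi-ring formulas (one in terms of the longest chain of $P$, one in terms of the longest chain of $L = \J(P)$) give different-looking answers, and only one produces $-\max\{m,n,r\}$. I expect the right framework is the standard description of the canonical module of a Hibi ring $K[\J(P)]$ via the order polytope, where the $a$-invariant equals $-$(the number of elements in a longest chain of $P$ counted appropriately), and one checks that for three disjoint chains this evaluates to $-\max\{m,n,r\}$ rather than to the total rank. I would verify this against the Cohen--Macaulay relation $\reg = \ainv + \dim$ and against small cases computed in Macaulay2 to make sure the conventions align, and then present (4) and (5) together as two sides of that identity.
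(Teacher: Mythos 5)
Your route for items (1), (2), (3), and (6) is sound and in places genuinely different from the paper's. For (6) you count degree-$d$ products of the generators $x_iy_jz_k$ of $K[A_{mnr}]$ directly (this is exactly the paper's alternative argument in its remark following the theorem; the paper's main proof instead counts order-preserving maps $P_{mnr}\to\{0,\dots,d\}$ via \cite{HiHi}), and you are right that the middle binomial in the statement is a typo for $\binom{n-1+d}{d}$. You then extract (2) and (3) from the degree and normalized leading coefficient of the Hilbert polynomial, whereas the paper gets (2) from $\dim K[\J(P)]=|P|+1$ and (3) by counting linear extensions of $P_{mnr}$ as words on the multiset $\{M^{m-1},N^{n-1},R^{r-1}\}$. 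Your Hilbert-function-first route is cleaner in that it needs only one external input; the paper's route has the side benefit of producing the combinatorial $h$-polynomial descriptions used in its corollaries. For (1) both of your counts (incomparable pairs in $L_{mnr}$, and $|\mathcal M|+|\mathcal N|+|\mathcal R|+|\mathcal T|$ from \Cref{thm: mingens}) are the ones the paper uses, the first in the proof and the second in a remark; the justification that the Hibi/sorting relations are a \emph{minimal} generating set is supplied by the paper's Remark on minimality and should be invoked explicitly.

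The genuine gap is in (4) and (5). You plan to derive $\reg$ from $\ainv$ via $\reg=\ainv+\dim$, but you do not actually establish $\ainv(R/I_{mn}^r)=-\max\{m,n,r\}$: you cycle through several candidate formulas ($-$(longest chain of $L$), $-(1+\operatorname{rank} L)$, a ``maximal antichain-complementary chain''), note that some give the wrong answer, and propose to settle the matter by checking small cases in Macaulay2. That is not a proof, and the one formula you state confidently (``the $a$-invariant equals $-$(the maximal cardinality of a chain in $L$)'') is false here, since it would give $-(m+n+r-2)$. Two clean fixes: either cite \cite[Theorem~1.1]{EHM}, $\reg K[\J(P)]=|P|-\rank(P)-1$, which with $|P_{mnr}|=m+n+r-3$ and $\rank(P_{mnr})=\max\{m,n,r\}-2$ gives (4) directly, and then obtain (5) from $\ainv=\reg-\dim$ using Cohen--Macaulayness (\Cref{prop: CM}) --- this is the paper's order of deduction, opposite to yours; or cite Hibi's description of the canonical module of $K[\J(P)]$ \cite{Hi}, which gives $\ainv=-\rank(\widehat{P})$ for $\widehat{P}=P\cup\{\hat 0,\hat 1\}$, here $-\bigl((\max\{m,n,r\}-2)+2\bigr)=-\max\{m,n,r\}$, and then deduce (4). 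Either way you need one precise citation; as written, (4) and (5) rest on an unverified formula.
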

\begin{proof}
\begin{enumerate}        
        \item We know that $I_{mn}^r$ is isomorphic to the presentation ideal of $K[L_{mnr}]$, which in turn is minimally generated by Hibi relations (see \Cref{rem: hibiorder}(2)), indexed by incomparable pairs of $L_{mnr}$.
        Therefore, in order to compute $\mu(I_{mn}^r)$, it is enough to compute the number of incomparable pairs of $L_{mnr}$ (see \Cref{fig: parallelepiped} for a better overview of the situation). Since $L_{mnr}$ has $mnr$ elements in total, there are ${mnr+1} \choose 2$ ways to choose a pair of (not necessarily distinct) elements. We will now compute in how many ways we can choose a pair of comparable elements, and subtract these two. For each element labeled $x_iy_jz_k$, all the elements smaller or equal to this element are located in the parallelepiped between $x_1y_1z_1$ and $x_iy_jz_k$, thus there are $ijk$ elements less than or equal to our chosen element. Hence, the number of pairs of (not necessarily distinct) comparable elements is 
        $$
             \sum_{i=1}^{m} \sum_{j=1}^{n} \sum_{k=1}^{r}{ijk}=\sum_{i=1}^{m}{i}
            \sum_{j=1}^{n}{j}\sum_{k=1}^{r}{k}=
            {{m+1} \choose 2} {{n+1} \choose 2} {{r+1} \choose 2}.
        $$
        Subtracting the two numbers gives us $\mu(I_{mn}^r)={mnr+1\choose 2}-{m+1\choose 2}{n+1\choose 2} {r+1\choose 2}$. 
           \smallskip
        \item Since $\dim K[\J(P)]=|P|+1$ (see \cite{Hi}), we get $\dim R/I_{mn}^r=\dim K[L_{mnr}]=\dim K[\J(P_{mnr})]=|P_{mnr}|+1=m+n+r-2$.
           \smallskip
        \item Let $L=\J(P)$. From \cite[Proposition~2.3]{HiHi} it follows the multiplicity of $K[L]$ equals the total number of linear extensions of $P$, which are in bijection with maximal chains of $L$. In our case these are all the paths from $x_1y_1z_1$ to $x_{m}y_{n}z_{r}$ directed upwards. Each such path can be encoded with a word on the multiset $\{M^{m-1}, N^{n-1},R^{r-1}\}$: every edge of $L_{mnr}$ changes either the $x$-index by $1$ (in which case we label it with $M$), or similarly the $y$-index (in which case we label it with $N$), or the $z$-index (in which case we label it with $R$). Then every upwards directed path from $x_1y_1z_1$ to $x_{m}y_{n}z_{r}$ can be encoded with the concatenation of the labels of its edges.
        Clearly, this gives a bijection between all the maximal chains of $L_{mnr}$ and the set of the words on the multiset $\{M^{m-1}, N^{n-1},R^{r-1}\}$,  and there are exactly $\frac{(m+n+r-3)!}{(m-1)!(n-1)!(r-1)!}$ of them. Indeed, the words are on $m+n+r-3$ symbols, hence there are $(m+n+r-3)!$ permutations; the result follows because any permutation on all the $M$'s gives the same word, and the same holds for $N$'s and $R$'s. 
        \smallskip
        \item The regularity of $K[\J(P)]$ is known to be $|P|-\rank(P)-1$ (see \cite[Theorem~1.1]{EHM}). Recall that $\rank(P)$ is the cardinality of the longest maximal chain of $P$ minus $1$. In our case $|P_{mnr}|=m+n+r-3$ and $\rank(P_{mnr})=\max\{m-1,n-1,r-1\}-1=\max\{m,n,r\}-2$. Hence $\reg (R/I_{mn}^r)=m+n+r-3-(\max\{m, n,r\} -2)-1=m+n+r-2-\max\{m,n,r\}$. 
           \smallskip
        \item Since $R/I^r_{mn}$ is Cohen-Macaulay, \[\ainv(R/I_{mn}^r)=\reg(R/I_{mn}^r)-\dim R/I_{mn}^r=-\max\{m,n,r\}.\] 
           \smallskip
        \item It is known that $\HF(K[J(P)],d)$ is the number of order-preserving maps $P\to \{0,\ldots, d\}$, see \cite[Proposition 2.3]{HiHi}, combined with \cite[Theorem 3.15.8]{Stn} (note that Stanley uses a slightly different definition of an order-preserving map, hence our formulas look different from his). Since the chains $A_1$, $A_2$, and $A_3$ are fully incomparable to each other, we can treat them separately. Any order-preserving map from $A_1$ to $\{0,\ldots, d\}$ can be encoded with the vector 
        $(a_1,a_2,\ldots,a_{m-1})$, where $0\le a_1\le a_2\le\ldots\le a_{m-1}\le d$. This vector can be uniquely recovered from the vector $(a_1,a_2-a_1,a_3-a_2,\ldots,a_{m-1}-a_{m-2},d-a_{m-1})\in\mathbb{N}^m$, or, equivalently, from the monomial $x_1^{a_1}x_2^{a_2-a_1}\cdots x_m^{d-a_{m-1}}\in K[x_1,\ldots,x_m]_d$. It is not so hard to see that if we argue in the same way for the chains $A_2$ and $A_3$, we get a bijection between the set of all the order-preserving maps $P_{mnr}\to \{0,\ldots, d\}$ and the set of  triples of monomials in $K[x_1,\ldots,x_m]_d\times K[y_1,\ldots,y_n]_d \times K[z_1,\ldots,z_r]_d$, and there are precisely $\binom{m-1+d}{d}\binom{n-1+d}{d}\binom{r-1+d}{d}$ of them.
\end{enumerate}
\end{proof}
\begin{remark}
\label{rmk:HSnotequal}
\begin{enumerate}
\item Note that by \Cref{thm: mingens}, the ideal $I_{A_{mnr}}$, which we know equals $I_{mn}^r$, is minimally generated by $\mathcal M\cup\mathcal N\cup\mathcal R\cup\mathcal T$. It is clear that these are disjoint sets of cardinality $m {r\choose 2} {n\choose 2}$, $n{r \choose 2}{m\choose 2}$, 
        $r{m \choose 2} {n\choose 2}$ and $3{r\choose 2}{m \choose 2}{n\choose 2}$, respectively. This gives a different way to calculate $\mu(I_{mn}^r)$, that is 
        $$\mu(I_{mn}^r)= m {r\choose 2} {n\choose 2}+n{r \choose 2}{m\choose 2}+r{m \choose 2} {n\choose 2}+3{r\choose 2}{m \choose 2}{n\choose 2}.$$
        One can easily check that this equals the formula  in \Cref{thm: invariants}(1).
    \item  Another way to prove \Cref{thm: invariants}(6) is to use $R/I_{mn}^r\cong K[A_{mnr}]$, see \Cref{thm: toric presentation}. Recall that $A_{mnr}$ is generated by all the monomials of the form $u_{ijk}=x_iy_jz_k$, with $ 1\le i\le m$, $ 1\le j\le n$, $1\le k\le r$, and that $\deg u_{ijk}=1$ in $K[A_{mnr}]$ for all $i,j,k$.
Thus a basis of $K[A_{mnr}]_d$ is given by all the products $u_{i_1j_1k_1}u_{i_2j_2k_2}\cdots u_{i_dj_dk_d}=(x_{i_1}\cdots x_{i_d})(y_{j_1}\cdots y_{j_d})(z_{k_1}\cdots z_{k_d})$. 
The set of these monomials is in an obvious bijection with the set of triples of monomials in $K[x_1,\ldots,x_m]_d\times K[y_1,\ldots,y_n]_d \times K[z_1,\ldots,z_r]_d$, hence there are precisely $\binom{m-1+d}{d}\binom{n-1+d}{d}\binom{r-1+d}{d}$ of them.
    \item All the formulas in \Cref{thm: invariants} are symmetric in $m$, $n$, $r$, which is consistent with \Cref{thm: isomorphism}.

 \end{enumerate}
\end{remark}

In order to give a formula for the $h$-polynomial (which is the numerator of the Hilbert series in its reduced form) of $R/I_{mn}^r$, let us index the elements of $P=\{p_1, \ldots, p_{|P|}\}$ so that $p_i\prec p_j$ implies $i<j$. In other words, we are giving $P$ a natural labeling.  Let $\mathcal{L}(P)$ be the set of the linear extensions of $P$, meaning the set of bijections $w: P \to \{1,2, \ldots, |P|\}$  respecting the partial order on $P$. A \emph{descent} of $w$ is an index $i$ such that $w(p_j)=i$ and $w(p_k)=i+1$ where $j>k$, and $\operatorname{des}(w)$ denotes the number of descents of $w$. 
It is pointed out in \cite[Proposition~2.3]{HiHi} that if $L=\J(P)$, then the Hilbert series of $K[L]$ is

\begin{equation}
\label{eq:Hilbert_series}
\HS(K[L],t)=\frac{\sum_{w \in \mathcal{L}(P)}t^{\operatorname{des}(\omega)}}{(1-t)^{|P|+1}}.
\end{equation}
By using this fact, one obtains a combinatorial formula for the $h$-polynomial of the toric double determinantal rings.
\begin{corollary}
    The $h$-polynomial of $R/I_{mn}^r$ is equal to $\sum_{w \in \mathcal{L}(P_{mnr})}t^{\operatorname{des}(\omega)}$.
\end{corollary}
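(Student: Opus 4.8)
By the definition adopted in the introduction, the $h$-polynomial of $R/I_{mn}^r$ is the numerator of its Hilbert series once the series is written in reduced form, that is, as $h(t)/(1-t)^{\dim R/I_{mn}^r}$ with $(1-t)\nmid h(t)$. Since Equation~\eqref{eq:Hilbert_series} already expresses the Hilbert series of a Hibi ring $K[\J(P)]$ as a quotient with numerator $\sum_{w\in\mathcal{L}(P)}t^{\des(w)}$ and denominator a power of $(1-t)$, the plan is simply to verify that, for $P=P_{mnr}$, this presentation is already the reduced one. There is essentially nothing to compute beyond matching the denominator exponent with the Krull dimension and checking a non-vanishing condition on the numerator.

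\textbf{Key steps.} First I would invoke the isomorphism $R/I_{mn}^r\cong K[L_{mnr}]=K[\J(P_{mnr})]$ together with Equation~\eqref{eq:Hilbert_series} applied to $P=P_{mnr}$, giving
\[
\HS(R/I_{mn}^r,t)=\frac{\sum_{w\in\mathcal{L}(P_{mnr})}t^{\des(w)}}{(1-t)^{|P_{mnr}|+1}}.
\]
Next I would record that $|P_{mnr}|=(m-1)+(n-1)+(r-1)=m+n+r-3$, so the denominator exponent is $|P_{mnr}|+1=m+n+r-2$, which by \Cref{thm: invariants}(2) equals $\dim R/I_{mn}^r$. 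Thus the exponent of $(1-t)$ in the denominator is exactly the Krull dimension of the ring.

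\textbf{Reduced-form check.} It then remains to confirm that $(1-t)$ does not divide the numerator, equivalently that the numerator does not vanish at $t=1$. Evaluating gives $\sum_{w\in\mathcal{L}(P_{mnr})}1^{\des(w)}=|\mathcal{L}(P_{mnr})|$, the number of linear extensions of $P_{mnr}$, which is strictly positive (indeed it equals the multiplicity $\e(R/I_{mn}^r)$ computed in \Cref{thm: invariants}(3), or simply because every finite poset admits at least one linear extension). Hence the fraction above is already in reduced form, and its numerator $\sum_{w\in\mathcal{L}(P_{mnr})}t^{\des(w)}$ is the $h$-polynomial of $R/I_{mn}^r$, as claimed.

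\textbf{Main obstacle.} There is no genuine obstacle here: the statement is an immediate unwinding of the definition of the $h$-polynomial against the Hibi-ring Hilbert series formula. The only point requiring a word of justification is the reduced-form condition, and this reduces to the trivial observation that a finite poset has a positive number of linear extensions.
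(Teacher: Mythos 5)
Your proof is correct and takes essentially the same route as the paper, which also deduces the claim directly from Equation~\eqref{eq:Hilbert_series} together with the equality $\dim(R/I_{mn}^r)=|P_{mnr}|+1$. Your additional check that $(1-t)$ does not divide the numerator (since $|\mathcal{L}(P_{mnr})|>0$) is a sensible extra verification that the paper leaves implicit.
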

\begin{proof}
    It follows immediately from \Cref{eq:Hilbert_series}, since $\dim(R/I_{mn}^r)=|P_{mnr}|+1$.
\end{proof}
We will now give another interpretation of the $h$-vector that is easily seen to be equivalent to the one above.
For nonnegative integers $a_1,a_2,\ldots,a_s$, let $M=\{1^{a_1},2^{a_2},\ldots, s^{a_s}\}$ and let $a=a_1+\cdots+a_s$. Let $S(M)$ denote the set of all the permutations on this multiset. In other words, it is a set of strings that contains exactly $a_1$ ones, $a_2$ twos, and so on. 
Given such a string $\sigma=\sigma_1\cdots \sigma_a$, a descent of $\sigma$ is an index $k$ such that $\sigma_{k}>\sigma_{k+1}$.  We denote by $\des(\sigma)$ the number of descents of $\sigma$. 

The next proposition is due to MacMahon \cite[Volume 2, p. 211]{MM}, see also \cite[Equation 1.1]{LXZ}.
\begin{proposition}
\label{McMahon}
$\sum_{\sigma\in S(M)}{t^{\des(\sigma)}}=(1-t)^{a+1}\sum_{d\ge 0}\binom{a_1+d}{d}\cdots\binom{a_s+d}{d}t^d$.
\end{proposition}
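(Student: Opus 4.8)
The plan is to prove MacMahon's identity
\[
\sum_{\sigma\in S(M)}t^{\des(\sigma)}=(1-t)^{a+1}\sum_{d\ge 0}\binom{a_1+d}{d}\cdots\binom{a_s+d}{d}\,t^d
\]
by showing that the two sides agree as formal power series in $t$. Since the left-hand side is a polynomial of degree at most $a-1$, dividing by $(1-t)^{a+1}$ produces a rational function whose power series expansion we must identify coefficient by coefficient. Equivalently, I would show that the generating function
\[
\frac{\sum_{\sigma\in S(M)}t^{\des(\sigma)}}{(1-t)^{a+1}}
\]
has $d$-th Taylor coefficient equal to $\binom{a_1+d}{d}\cdots\binom{a_s+d}{d}$ for every $d\ge 0$. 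The cleanest route is a combinatorial bijection: interpret the product of binomial coefficients as counting a concrete set of objects, and match that count against a known expansion of the descent generating function.

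First I would recall the standard $P$-partition machinery (or its multiset analogue). The key classical fact is that if one expands $1/(1-t)^{a+1}$ and multiplies by the descent polynomial, the coefficient of $t^d$ counts the number of weakly increasing ``fillings'' compatible with the word structure — concretely, the number of functions $f$ assigning to each of the $a$ positions a value in $\{0,1,\ldots,d\}$ that are weakly increasing along the word and strictly increasing across descents. Next I would observe that for a multiset permutation, such compatible fillings decouple across the distinct letter-classes: a letter-class of size $a_i$ contributes independently a weakly increasing sequence of length $a_i$ with values in $\{0,\ldots,d\}$, and the number of such sequences is exactly $\binom{a_i+d}{d}$ by the stars-and-bars count of multisets of size $a_i$ from $d+1$ values. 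Multiplying these independent contributions over $i=1,\ldots,s$ yields $\prod_{i=1}^{s}\binom{a_i+d}{d}$, which is precisely the claimed coefficient of $t^d$.

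The main obstacle, and the step requiring the most care, is justifying the coefficient extraction: namely that the coefficient of $t^d$ in $\left(\sum_\sigma t^{\des(\sigma)}\right)/(1-t)^{a+1}$ equals the number of these weakly increasing compatible fillings. This is the heart of MacMahon's theorem and is most transparently obtained from the identity
\[
\sum_{d\ge 0}\Big(\#\{\text{fillings into }\{0,\ldots,d\}\}\Big)t^d
=\frac{\sum_{\sigma\in S(M)}t^{\des(\sigma)}}{(1-t)^{a+1}},
\]
which one proves by partitioning all fillings according to the underlying multiset permutation $\sigma$ they induce (the permutation read off by sorting the positions by value, breaking ties by position) and summing the geometric-type contribution $t^{\des(\sigma)}/(1-t)^{a+1}$ of each class. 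Since this is MacMahon's classical result, cited to \cite{MM} and \cite{LXZ}, I would ultimately present it as a consequence of that reference rather than reprove the $P$-partition expansion from scratch, and merely verify that the multiset decoupling gives the stated product of binomials.
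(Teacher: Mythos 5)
The paper offers no proof of this proposition at all: it is attributed to MacMahon and dispatched with citations to \cite{MM} and \cite{LXZ}. Your outline is therefore not so much a different route as the only route on display, and it is the standard one: expand $1/(1-t)^{a+1}$, identify the coefficient of $t^d$ in the left-hand side divided by $(1-t)^{a+1}$ with the number of value-assignments from $M$ to $\{0,\ldots,d\}$ via the $P$-partition (barred-permutation) decomposition, and observe that these assignments decouple over the $s$ letter-classes to give $\prod_i\binom{a_i+d}{d}$ by stars and bars. This is correct in substance and is in fact the same counting the paper itself performs, in a different guise, in the proof of Theorem 4.4(6), where order-preserving maps $P_{mnr}\to\{0,\ldots,d\}$ decouple over the three chains; so your sketch has the virtue of making the link between the two corollaries on the $h$-polynomial transparent. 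The one step you should not wave at is the bijection between arbitrary assignments and pairs $(\sigma,\text{compatible filling})$: for multiset permutations the tie-breaking rule matters (among positions with equal value one must order the letters increasingly, not ``by position''), since that convention is exactly what forces strict increase at descents and weak increase elsewhere, and hence what makes the per-$\sigma$ count come out to $\binom{a+d-\des(\sigma)}{a}$, whose generating function is $t^{\des(\sigma)}/(1-t)^{a+1}$. With that detail pinned down, your argument is complete; falling back on the citation, as the paper does, is of course also legitimate.
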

As a consequence, we get another description for the $h$-polynomial of $R/I_{mn}^r$.
\begin{corollary}
  Let $M=\{1^{m-1},2^{n-1},3^{r-1}\}$. Then the $h$-polynomial of $R/I_{mn}^r$ is $\sum_{\sigma\in S(M)}{t^{\des(\sigma)}}$.
\end{corollary}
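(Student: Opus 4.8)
The plan is to reduce this corollary to the combination of two facts already in hand: the Hilbert series formula \eqref{eq:Hilbert_series} applied to $P_{mnr}$, and MacMahon's identity in \Cref{McMahon}. The key observation is that $P_{mnr}$ consists of three mutually incomparable chains $A_1,A_2,A_3$ of sizes $m-1$, $n-1$, $r-1$, so a linear extension of $P_{mnr}$ amounts to interleaving the three chains, and the descent statistic on such extensions should match the descent statistic on permutations of the multiset $M=\{1^{m-1},2^{n-1},3^{r-1}\}$.

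First I would set up the bijection between $\mathcal{L}(P_{mnr})$ and $S(M)$. Fix the natural labeling of $P_{mnr}$ in which the elements of $A_1$ come first (in chain order), then those of $A_2$, then those of $A_3$; this is a valid natural labeling since the three chains are incomparable. Given a linear extension $w:P_{mnr}\to\{1,\ldots,m+n+r-3\}$, record, reading the positions $1,2,\ldots,m+n+r-3$ in increasing order of $w$-value, which chain each element belongs to, producing a string $\sigma\in S(M)$ with $a_1=m-1$ ones, $a_2=n-1$ twos, and $a_3=r-1$ threes. Because each $A_i$ is itself a chain and the natural labeling respects chain order within each $A_i$, a linear extension is completely determined by this interleaving pattern, so the map $w\mapsto\sigma$ is a bijection.

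The crux is to check that $\des(w)=\des(\sigma)$ under this bijection. A descent of $w$ is an index $i$ with $w^{-1}(i)=p_k$, $w^{-1}(i+1)=p_j$, and $j>k$ in the natural labeling; since within each chain the labeling is increasing and consecutive elements of the same chain are never a descent, a descent can only occur when the two consecutively-placed elements lie in different chains with the later-placed one coming from an earlier-indexed block. Translating this through the encoding shows that $w$ has a descent at position $i$ exactly when $\sigma_i>\sigma_{i+1}$, i.e. precisely at the descents of $\sigma$. I expect this step—carefully matching the two descent conventions through the bijection—to be the main obstacle, since one must confirm that same-chain consecutive placements contribute no descent on either side and that cross-chain placements agree in sign.

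Once the bijection is descent-preserving, we get
\[
\sum_{w\in\mathcal{L}(P_{mnr})}t^{\des(w)}=\sum_{\sigma\in S(M)}t^{\des(\sigma)},
\]
and the preceding corollary identifies the left-hand side as the $h$-polynomial of $R/I_{mn}^r$, completing the proof. As a consistency check, applying \Cref{McMahon} with $a=m+n+r-3$ recovers $\HF(R/I_{mn}^r,d)=\binom{m-1+d}{d}\binom{n-1+d}{d}\binom{r-1+d}{d}$ from \Cref{thm: invariants}(6), which confirms that this $h$-polynomial is indeed the numerator of the Hilbert series with denominator $(1-t)^{m+n+r-2}$.
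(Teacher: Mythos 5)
Your proof is correct, but it takes a genuinely different route from the paper's. The paper's proof is purely generating-function-theoretic: it plugs $a_1=m-1$, $a_2=n-1$, $a_3=r-1$ into MacMahon's identity (\Cref{McMahon}), recognizes the right-hand side as the Hilbert series of $R/I_{mn}^r$ via the Hilbert function formula in \Cref{thm: invariants}(6), and reads off the $h$-polynomial from the known dimension $m+n+r-2$. You instead start from the preceding corollary (the $h$-polynomial as $\sum_{w\in\mathcal{L}(P_{mnr})}t^{\des(w)}$) and construct a descent-preserving bijection between linear extensions of $P_{mnr}$ and words in $S(M)$, exploiting the fact that $P_{mnr}$ is three mutually incomparable chains and that the natural labeling is block-ordered by chain. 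Your bijection argument is sound: same-chain consecutive placements are never descents on either side, and cross-chain comparisons of natural labels reduce to comparisons of block indices. Interestingly, the paper carries out essentially your argument, but only as the remark immediately following the corollary, to explain \emph{why} the two combinatorial formulas agree; it is not the official proof. Your route has the advantage of not invoking MacMahon's identity at all (it yields the identity of the two descent polynomials by pure combinatorics), while the paper's route is shorter given that the Hilbert function was already computed and makes the connection to the Hilbert series explicit.
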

\begin{proof}
If we set $s=3$, $a_1=m-1$, $a_2=n-1$, $a_3=r-1$ in \Cref{McMahon}, we obtain 
$$
\frac{\sum_{\sigma\in S(M)}{t^{\des(\sigma)}}}{(1-t)^{m+n+r-2}}=\sum_{d\ge 0}\binom{m-1+d}{d}\binom{n-1+d}{d}\binom{r-1+d}{d}t^d,
$$
whose right hand side is the Hilbert series of $R/I_{mn}^r$ as we showed in \Cref{thm: invariants}. Since $\dim R/I_{mn}^r=m+n+r-2$ by \Cref{thm: invariants}(2), the proof is complete. 
\end{proof}
\begin{remark} 
It is not hard to see why the two combinatorial formulas for the $h$-polynomial of $R/I_{mn}^r$ coincide. One of the sums runs over all the linear extensions of $P_{mnr}$, the other one runs over all the elements in the multiset $\{1^{m-1},2^{n-1},3^{r-1}\}$. In order to show that the two polynomials coincide, it is sufficient to find a bijection between the two sets that preserves the number of descents. First of all we give our $P_{mnr}$ the following natural labeling: elements of $\red{A_1}$ are labeled $\red{p_1},\ldots, \red{p_{m-1}}$, elements of $\green{A_2}$ are labeled $\green{p_m},\green{p_{m+1}}\ldots, \green{p_{m+n-2}}$, and finally elements of $\blue{A_3}$ are labeled $\blue{p_{m+n-1}},\ldots, \blue{p_{m+n+r-3}}$. 

We will show how this bijection works in the case  $m=3$, $n=2$, $r=4$ (see \Cref{ex: hibisortable} and \Cref{fig: parallelepiped}), but the same idea works for any $P_{mnr}$. With the new labeling  introduced above, the elements of $P_{\red{3}\green{2}\blue{4}}$ are labeled $\red{p_1},\red{p_2},\green{p_3},\blue{p_4}, \blue{p_5},\blue{p_6}$ (see \Cref{fig: labelling}).
\begin{figure}[H]
\centering
\begin{tikzpicture}[scale=0.8]

\draw[thick,red] (-12,0)--(-12,2);
\filldraw[red] (-12,0) circle (2pt);
\filldraw[red] (-12,2) circle (2pt);
\node [thin, left] at (-12,0) {$\red{p_1}$};
\node [thin, left] at (-12,2) {$\red{p_2}$};
\node [thin, red] at (-12,-1) {$A_1$};

\filldraw[green] (-10,0) circle (2pt);
\node [thin, left] at (-10,0) {$\green{p_3}$};
\node [thin, green] at (-10,-1) {$A_2$};

\draw[thick,blue] (-8,0)--(-8,4);
\filldraw[blue] (-8,0) circle (2pt);
\filldraw[blue] (-8,2) circle (2pt);
\filldraw[blue] (-8,4) circle (2pt);
\node [thin, left] at (-8,0) {$\blue{p_4}$};
\node [thin, left] at (-8,2) {$\blue{p_5}$};
\node [thin, left] at (-8,4) {$\blue{p_6}$};
\node [thin, blue] at (-8,-1) {$A_3$};
\end{tikzpicture}
\caption{A natural labeling of $P_{\red{3}\green{2}\blue{4}}$}
\label{fig: labelling}
\end{figure}
Consider the linear extension $\red{p_1}\preceq \green{p_3}\preceq\blue{p_4}\preceq\red{p_2}\preceq\blue{p_5}\preceq\blue{p_6}$. In other words, $w(\red{p_1})=1$, $w(\green{p_3})=2$ etc. The descent set of this extension can be found as the descent set of the permutation $\red{1}\green{3}\blue{4}\red{2}\blue{5}\blue{6}$. We will abuse notation and say $w=\red{1}\green{3}\blue{4}\red{2}\blue{5}\blue{6}$ for convenience.
To this permutation $w$ we associate the word $\sigma=123133\in \{1^2,2^1,3^3\}$ simply by replacing  the red numbers with $1$'s, the green numbers with $2$'s, and the blue numbers with $3$'s. We can uniquely recover $w$ from any $\sigma$. Indeed, first of all, $\sigma$ tells us the colors, in a sense that the permutation we are looking for has to be of the form $\red{*}\green{*}\blue{*}\red{*}\blue{*}\blue{*}$. Since the result should be a linear extension, meaning that it has to respect the partial order of $P_{mnr}$, the red labels all have to go in the increasing order (since $\red{p_1}\preceq \red{p_2}$ in $P_{mnr}$, the same must hold in any linear extension). The same holds for the green and blue labels, hence there is exactly one way to get $w$ for any $\sigma$. It is not hard to notice that $w$ (in this case $\red{1}\green{3}\blue{4}\red{2}\blue{5}\blue{6}$) and $\sigma$ (in this case $123133$) have the same descent set. Indeed, all the descents in $\sigma$ are of the shapes $21$, $32$, $31$. The descent $21$ translates to a subword of sorts $\green{*}\red{*}$, and any green label is bigger than any red one, hence it is a descent in $w$. The same holds for descents of sorts $32$ and $31$. Conversely, consider a descent in $w$. It can not happen between numbers of the same color since $w$ is a linear extension (as discussed above). Since any blue label is bigger than any green label, and any green label is bigger than any red label, the only possible descents in $w$ are of the forms $\green{*}\red{*}$ (which translates to a descent $21$ in $\sigma$), $\blue{*}\green{*}$ (32) and $\blue{*}\red{*}$ (31). 

\end{remark}

\smallskip

Finally, we address the question of the Gorenstein property of $R/I_{mn}^r$.
\begin{theorem}
\label{thm: Gorenstein}
 The ring $R/I_{mn}^r$ is Gorenstein if and only if $\{m,n,r\}\subseteq \{1,\max\{m,n,r\}\}$.
 \end{theorem}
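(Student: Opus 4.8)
The plan is to reduce the Gorenstein property to a purely combinatorial condition on the poset $P_{mnr}$ by exploiting the isomorphism $R/I_{mn}^r \cong K[L_{mnr}] = K[\mathcal{J}(P_{mnr})]$ established in \Cref{cor: dd are Hibi}. The central tool is Hibi's criterion from \cite{Hi}: for a finite poset $P$, the Hibi ring $K[\mathcal{J}(P)]$ is Gorenstein if and only if $P$ is pure, that is, all of its maximal chains have the same length. With this in hand the entire theorem becomes a question about purity of $P_{mnr}$.

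First I would determine the maximal chains of $P_{mnr}$. By \Cref{def:Lmnr}, the poset $P_{mnr}$ is a disjoint union of three mutually incomparable chains $A_1, A_2, A_3$ with $|A_1| = m-1$, $|A_2| = n-1$, $|A_3| = r-1$. Since the three chains are pairwise incomparable, the maximal chains of $P_{mnr}$ are exactly the nonempty chains among $A_1, A_2, A_3$, and the length of $A_i$ is $|A_i| - 1$. Hence $P_{mnr}$ is pure precisely when all nonempty chains among the $A_i$ have the same cardinality.

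It then remains to translate this into the stated arithmetic condition. A chain $A_i$ is empty exactly when the corresponding parameter equals $1$, so requiring the nonempty chains to share a common cardinality is the same as requiring all of the values among $m, n, r$ that exceed $1$ to coincide; any such common value must equal $\max\{m,n,r\}$. This is precisely the statement that each of $m, n, r$ lies in $\{1, \max\{m,n,r\}\}$, i.e.\ $\{m,n,r\} \subseteq \{1, \max\{m,n,r\}\}$. The degenerate situations in which at most one parameter exceeds $1$---in particular $m=n=r=1$, where $P_{mnr}$ is empty and $K[L_{mnr}]$ is a polynomial ring---are trivially pure and satisfy the inclusion.

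I do not anticipate any real difficulty beyond correctly invoking Hibi's criterion; the only point needing care is the bookkeeping of empty chains, so that the boundary cases $m=1$, $n=1$ or $r=1$ are handled consistently when passing from ``equal cardinalities of the nonempty chains'' to the set-theoretic inclusion.
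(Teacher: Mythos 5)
Your proposal is correct and follows essentially the same route as the paper: both invoke Hibi's criterion that $K[\mathcal{J}(P)]$ is Gorenstein if and only if $P$ is pure, identify the maximal chains of $P_{mnr}$ as the nonempty chains among $A_1,A_2,A_3$, and translate the purity condition into the requirement that each of $m,n,r$ be either $1$ or equal to $\max\{m,n,r\}$. Your explicit handling of the empty-chain boundary cases matches the paper's treatment.
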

 \begin{proof}
  From \cite{Hi} we know that $K[L_{mnr}]$ is Gorenstein if and only if the corresponding poset $P_{mnr}$ is pure. In other words, all maximal chains of $P_{mnr}$ have to be of the same length or, equivalently, of the same cardinality. Clearly, the cardinality of the longest chain in $P_{mnr}$ is $\max\{m-1,n-1,r-1\}$. The cardinality of $A_1$ is $m-1$. This chain either needs to have the maximal cardinality, that is $m-1=\max\{m-1,n-1,r-1\}$, or be empty, that is, $m-1=0$. That is, either $m=\max\{m,n,r\}$ or $m=1$. The same holds for the chains $A_2$ and $A_3$, and this concludes the proof.
 \end{proof}

\section{The simplicial complex associated to double determinantal ideals}
The following result was proved by  Fieldsteel and Klein \cite[Theorem 3.2]{FK}, for a double determinantal ideal generated by minors of any size:
\begin{theorem}
\label{thm:GB}
The minors generating $I_{mn}^r$ are a Gr\"obner basis with respect to any diagonal order.
\end{theorem}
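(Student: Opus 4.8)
The plan is to show that, for any diagonal order $\tau$, the diagonal leading terms of the generating minors generate $\ini_\tau(I_{mn}^r)$; by definition this is equivalent to the asserted Gröbner basis property. By definition of a diagonal order, the leading term of each minor of $H$ or of $V$ is the product of the entries on its main diagonal. Let $J\subseteq R$ be the monomial ideal generated by all these diagonal terms. Since every minor lies in $I_{mn}^r$ we have $J\subseteq \ini_\tau(I_{mn}^r)$, so $R/J$ surjects onto $R/\ini_\tau(I_{mn}^r)$. Because $\dim_K (R/\ini_\tau(I_{mn}^r))_d=\dim_K(R/I_{mn}^r)_d$ for every $d$, it suffices to prove the reverse inequality $\dim_K(R/J)_d\le \dim_K(R/I_{mn}^r)_d$; the resulting equality of Hilbert functions forces $J=\ini_\tau(I_{mn}^r)$.

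In the toric case $t_h=t_v=2$ that concerns us, the right-hand side is known explicitly: by \Cref{thm: invariants}(6) one has $\dim_K(R/I_{mn}^r)_d=\binom{m-1+d}{d}\binom{n-1+d}{d}\binom{r-1+d}{d}$. Thus the task reduces to counting the \emph{standard monomials} of $J$, that is, the degree-$d$ monomials in the $x_{ij}^k$ divisible by no diagonal leading term, and showing there are at most (hence exactly) that many. First I would record the pairwise rule: identifying a variable $x_{ij}^k$ with the lattice point $(i,j,k)$, a short case analysis of the diagonal terms of the $2$-minors of $H$ and of $V$ shows that two variables may coexist in a standard monomial precisely when, across distinct matrices $X_{k_1},X_{k_2}$ with $k_1<k_2$, the point from the earlier matrix weakly dominates the one from the later matrix in both the $i$- and the $j$-index, while within a single matrix $X_k$ the chosen points are anti-monotone in $(i,j)$ (no two forming the diagonal of a $2$-minor of $X_k$). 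A standard monomial is then a configuration of lattice points satisfying all pairwise conditions, and the core of the argument is a cardinality-preserving bijection between such configurations and the multichains of $L_{mnr}$, equivalently the triples of weakly increasing sequences of lengths $m-1$, $n-1$, $r-1$, of which there are exactly $\binom{m-1+d}{d}\binom{n-1+d}{d}\binom{r-1+d}{d}$.

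The main obstacle is exactly this bijection, and it cannot be obtained for free from the sorting Gröbner basis of \Cref{thm:sortingGB}. Indeed, a diagonal order is \emph{not} a sorting order: the minor $x_{i_1 j}^{k_1}x_{i_2 j}^{k_2}-x_{i_2 j}^{k_1}x_{i_1 j}^{k_2}$ of $H$ has diagonal leading term $x_{i_1 j}^{k_1}x_{i_2 j}^{k_2}$, whereas the corresponding sorting relation fixes the other monomial $x_{i_2 j}^{k_1}x_{i_1 j}^{k_2}$, so $\ini_\tau(I_{mn}^r)$ and the sorting initial ideal are genuinely different monomial ideals that merely share a Hilbert function. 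I expect the cleanest route to the count is an insertion/straightening argument in the spirit of the Robinson--Schensted--Knuth correspondence, set up so that the nesting across the $X_k$ is recorded by one of the three increasing sequences while the within-matrix anti-monotone structure is handled by the classical generic-matrix case; the interaction between an $H$-minor and a $V$-minor sharing an entry is where the argument is delicate.

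For minors of arbitrary size, as in the full statement, the same Hilbert-function strategy applies, but now requires a standard monomial theory (a straightening law) for products of minors of both concatenations $H$ and $V$ simultaneously; proving that the products of diagonal terms are precisely the non-standard monomials is the substantive content, and is carried out in \cite{FK} (see also \cite{IL}). An alternative to the whole approach is a direct verification of Buchberger's criterion: the $S$-pairs of two minors both drawn from $H$, or both from $V$, reduce to zero exactly as in the classical generic determinantal setting, so the essential reductions are those of an $H$-minor against a $V$-minor, which is precisely where I would expect to concentrate the work.
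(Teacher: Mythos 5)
This statement is not proved in the paper at all: it is imported verbatim from Fieldsteel--Klein \cite[Theorem 3.2]{FK} (where it is established for minors of any size), so there is no in-paper argument to compare yours against. Judged on its own terms, your write-up is a proof \emph{plan} rather than a proof, and the plan stops exactly at the substantive step. The reduction to a Hilbert-function comparison is sound (Macaulay's theorem gives $\dim_K(R/\ini_\tau(I_{mn}^r))_d=\dim_K(R/I_{mn}^r)_d$, and $J\subseteq\ini_\tau(I_{mn}^r)$ gives one inequality), your description of the pairwise compatibility conditions on the support of a standard monomial of $J$ is correct, and your observation that a diagonal order is \emph{not} a sorting order --- so that \Cref{thm:sortingGB} and \Cref{thm:hibiGB} cannot be invoked, the diagonal initial ideal being genuinely different from the sorting one --- is both correct and worth making explicit (the paper's remark that $\ini(I_{22}^3)$ and $\ini(I_{23}^2)$ have different resolutions confirms this). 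But the entire content of the theorem is then concentrated in the claim that the number of degree-$d$ monomials avoiding all diagonal leading terms is at most $\binom{m-1+d}{d}\binom{n-1+d}{d}\binom{r-1+d}{d}$, and for this you offer only ``I expect the cleanest route is an insertion/straightening argument'' and ``the interaction between an $H$-minor and a $V$-minor sharing an entry is where the argument is delicate.'' That is precisely the part that needs to be done; neither the claimed bijection nor the alternative Buchberger $S$-pair reductions (of an $H$-minor against a $V$-minor) is carried out. Note also that you cannot shortcut the count via the facet description of \Cref{facets}, since that result is itself derived from \Cref{thm:GB}.

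A secondary point: your Hilbert-function strategy is only available in the $t_h=t_v=2$ case, where \Cref{thm: invariants}(6) supplies the right-hand side (legitimately, since it rests on the sorting presentation of \Cref{thm: toric presentation} and not on the diagonal Gr\"obner basis, so there is no circularity). For minors of arbitrary size --- which is the generality in which the theorem is stated and used --- no closed form for the Hilbert function is available in this paper, so even the framework of your argument defers entirely to the standard monomial theory of \cite{FK} and \cite{IL}, as you acknowledge. In short: the strategy is reasonable and the preparatory observations are accurate, but the proof has a genuine gap at its core and, as written, amounts to a reduction of the theorem to an unproved combinatorial identity.
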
 

In this context, we call a term order diagonal if the leading term of any minor of $H$ and $V$ is the product of the indeterminates of the main diagonal. For example the lexicographic order with $x_{11}^{1}>x_{12}^{1}>\cdots>x_{1n}^{1}>x_{21}^{1}>\cdots>x_{2n}^{1}>\cdots >x_{mn}^{1}>x_{11}^{2}>\cdots >x_{mn}^{2}>\cdots >x_{mn}^{r}$ is a diagonal term order.  For the remainder of this paper, all initial ideals will be with respect to a diagonal term order.

Throughout this section, we will use the same notation as in the previous section, and consider the ideals $I_{mn}^r$ in the case $t_h=t_v=2$.  Moreover we will denote by $\Delta$ the simplicial complex associated to $\ini(I_{mn}^r)$, that is, the simplicial complex such that $\ini(I_{mn}^r)=I_{\Delta}$, that is $R/\ini(I_{mn}^r)=K[\Delta]$ is the Stanley-Reisner ring associated to $\Delta$ (see \cite[Section 1.5]{HH}).  In the remainder of this section we give two descriptions of the facets of $\Delta$.

\smallskip
By \Cref{thm:GB} the ideal $\ini(I_{mn}^r)$ is generated by the $2$-diagonals in the horizontal and in the vertical concatenation of $X_1,\ldots,X_r$, with $X_k=(x_{ij}^k)$, that is, by the monomials $x_{ij}^kx_{i'j'}^{k'}$ such that one of the following conditions holds
\begin{itemize}
    \item[(a)] $k=k'$, $i'>i$ and $j'>j$, 
    \item[(b)] $k<k'$, $i<i'$, any $j$, $j'$,  
    \item[(c)] $k<k'$, $j<j'$, any $i$, $i'$.
\end{itemize}
Note that in the above conditions, (a)
describes the diagonal of a $2$-minor which occurs in one of $X_1, \dots, X_r$, (b) describes the diagonal of a $2$-minor whose columns occur in two different matrices in the horizontal concatenation, and (c) describes the diagonal of a $2$-minor which whose rows appear in two different matrices in the vertical concatenation.

To describe $\Delta$ we identify the variables of $R = K[x_{ij}^k\,|\, 1\le i\le m,\ 1\le j\le n, \ 1 \le k \le r]$ with the set
$$\mathcal{P}=\{(i,j):1\leq i\leq m, \ 1\leq j\leq nr\}$$
by associating  $x_{ij}^k$ to $(i,(k-1)n+j$ for every $i,j$ and $k$, and we use $\mathcal{P}$ as the vertex set of $\Delta.$ Thus the faces of $\Delta$ are the subsets of $\mathcal{P}$ that don't contain any pair of points $P_1=(i,(k-1)n+j),\ P_2=(i',(k'-1)n+j')$ such that one of the conditions (a), (b), and (c) above hold.

Denote by $\mathcal{F}(\Delta)$ the set of the facets of $\Delta$.  In the theorem below, we give a description of the facets as the union of paths. 

\begin{definition}
    A subset $\{(g_1,h_1), \ldots, (g_t, h_t) \}$ of elements of $\mathcal{P}$ is said to be a {\it path} with starting point $(g_1,h_1)$ and ending point $(g_t, h_t)$ if $(g_{s+1}, h_{s+1}) - (g_s, h_s) \in \{(-1,0), (0,1)\}$ for all $s=1,\ldots, t-1$. We denote it by $(g_1,h_1)\longrightarrow (g_s, h_s)$. 
\end{definition} 

\begin{remark}\label{completingPaths}
Note that any pair of points $(g, h)$, $(g', h')$ with the property that $g \geq g'$ and $h \leq h'$ lie on a path.  Specifically, they lie on the path $\{(g, h), (g, h+1), (g, h+2), \dots, (g, h'), (g -1, h'), (g-2, h'), \dots, (g', h')$. Thus by extension, there is a path which contains any sequence of points $(g_1, h_1), (g_2, h_2), \dots, (g_t, h_t)$ provided that $g_1 \geq g_2 \geq \dots \geq g_t$ and $h_1 \leq h_2 \leq \dots \leq h_t$.
\end{remark}
With an abuse of notation, in the following we consider also a singleton $(g,h)$ as a path $(g,h)\longrightarrow (g,h)$. 
  \begin{theorem}\label{facets}
      Let $\ini(I_{mn}^r)=I_\Delta$, with $\Delta$ simplicial complex on $\mathcal{P}$. Then $F\in \mathcal{F}(\Delta)$ if and only if $F$ is the union of the following $r$ paths:      
 \[
  	(g_{k-1}, (k-1)n+h_{k}) \longrightarrow (g_{k},(k-1)n+h_{k-1}) \mbox{ for $1\leq k\leq r$,}
\]  
    for some $m=g_0\geq g_1\geq g_2\geq \cdots \geq g_{r-1}\geq g_r= 1$ and $n=h_0\geq h_1\geq h_2\geq\cdots \geq h_{r-1}\geq h_r= 1$.
  \end{theorem}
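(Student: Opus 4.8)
The plan is to translate the diagonal conditions (a)--(c) into pairwise ``coexistence'' rules on $\mathcal{P}$ and then run a single cardinality argument. First I would record, under the identification $x_{ij}^k\leftrightarrow(i,(k-1)n+j)$, exactly when two vertices may lie in a common face. Writing $p,p'$ for vertices in blocks $k\le k'$, where block $k$ denotes the columns $(k-1)n+1,\dots,kn$, the conditions say: if $k=k'$, then $\{p,p'\}$ is a face if and only if the two points are comparable in the order $(i,j)\preceq(i',j')\iff i\le i',\ j\ge j'$, i.e. they lie on a common staircase with row decreasing and column increasing as in \Cref{completingPaths}; and if $k<k'$, then $\{p,p'\}$ is a face if and only if the later point is weakly below and weakly left of the earlier one in within-block coordinates, i.e. $i\ge i'$ and $j\ge j'$. (Here the first rule comes from negating (a), the second from negating (b) and (c).)

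Next I would prove that \emph{every} face has at most $m+n+r-2$ vertices, together with an explicit equality characterization. For a face $F$, the first rule shows that $F\cap(\text{block }k)$ is a $\preceq$-chain; letting $\alpha_k\ge\beta_k$ be its largest and smallest row and $\gamma_k\ge\delta_k$ its largest and smallest within-block column, a chain count gives $|F\cap(\text{block }k)|\le(\alpha_k-\beta_k)+(\gamma_k-\delta_k)+1$, with equality exactly when the chain is a full unit-step staircase from $(\alpha_k,\delta_k)$ to $(\beta_k,\gamma_k)$. The second rule gives the nesting $\beta_k\ge\alpha_{k'}$ and $\delta_k\ge\gamma_{k'}$ for any nonempty blocks $k<k'$. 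Summing the block bounds over nonempty blocks and telescoping (using $\alpha_{k'}\le\beta_k$, $\gamma_{k'}\le\delta_k$ for consecutive nonempty blocks, together with $\alpha_\bullet\le m$, $\beta_\bullet\ge1$, $\gamma_\bullet\le n$, $\delta_\bullet\ge1$) yields $|F|\le(m-1)+(n-1)+(\#\text{nonempty blocks})\le m+n+r-2$. Equality forces all $r$ blocks to be nonempty, each block to be a full staircase, and the nesting to be tight: $\alpha_1=m$, $\beta_r=1$, $\beta_k=\alpha_{k+1}$, $\gamma_1=n$, $\delta_r=1$, $\delta_k=\gamma_{k+1}$.

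For the ``if'' direction I would check that, given monotone sequences $m=g_0\ge\cdots\ge g_r=1$ and $n=h_0\ge\cdots\ge h_r=1$, the union of any $r$ staircases with the prescribed endpoints is a face. Within a block a staircase is automatically a $\preceq$-chain, and between blocks $k<k'$ the row range $[g_k,g_{k-1}]$ and within-block column range $[h_k,h_{k-1}]$ of block $k$ sit weakly above and to the right of those of block $k'$, precisely because the $g$'s and $h$'s decrease; this is exactly the required $i\ge i'$, $j\ge j'$. Since the $r$ staircases lie in distinct blocks, they are disjoint, and their union has $\sum_{k=1}^r[(g_{k-1}-g_k)+(h_{k-1}-h_k)+1]=m+n+r-2$ vertices. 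Being of the maximal possible size established above, such a face is automatically a facet.

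Finally, for the ``only if'' direction I would take a facet $F$ and argue $|F|=m+n+r-2$, whence the equality characterization produces the sequences $g_k:=\beta_k=\alpha_{k+1}$ and $h_k:=\delta_k=\gamma_{k+1}$ and identifies each $F\cap(\text{block }k)$ as a full staircase from $(g_{k-1},(k-1)n+h_k)$ to $(g_k,(k-1)n+h_{k-1})$, which is the asserted form. To conclude that $|F|$ is maximal I would invoke purity of $\Delta$: the complex is shellable, hence Cohen--Macaulay and pure, by \cite{L}, so every facet has the top dimension $m+n+r-3$. The main obstacle is exactly this last step. If one instead wants a self-contained argument avoiding the appeal to purity, one must show directly that the failure of any equality condition---an empty block, a gap in some staircase, or a strict nesting inequality or boundary slack---permits inserting a new vertex without creating a pair of type (a), (b), or (c), contradicting maximality; verifying that each such inserted vertex remains compatible with \emph{all} other blocks simultaneously is the delicate computational heart of the proof.
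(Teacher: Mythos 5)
Your overall strategy is sound and genuinely different from the paper's in one half, identical in the other. The translation of conditions (a)--(c) into the two coexistence rules is exactly what the paper does, and your ``if'' direction is essentially the paper's, except that where the paper certifies maximality by computing $|F|=m+n+r-2=\dim R/\ini(I_{mn}^r)$ and citing \cite[Corollary 6.2.2]{HH}, you instead prove a self-contained combinatorial upper bound $|F|\le (m-1)+(n-1)+(\#\text{nonempty blocks})\le m+n+r-2$ on \emph{all} faces by the chain count within each block plus the telescoping forced by the nesting inequalities $\beta_k\ge\alpha_{k'}$, $\delta_k\ge\gamma_{k'}$. That bound, together with its equality characterization, is correct and is arguably a nice bonus: it re-derives the dimension of the complex without appealing to the height formula. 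Where the two proofs truly diverge is the ``only if'' direction: the paper never needs to know in advance that facets have maximal cardinality, because it shows constructively (via \Cref{completingPaths}, including the bookkeeping for empty blocks) that \emph{every} face is contained in a union of paths of the prescribed form; maximality of a facet then forces equality. Your route instead deduces the form of a facet from $|F|=m+n+r-2$ via the equality characterization, which requires knowing that every facet attains the maximum, i.e.\ purity of $\Delta$.

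That last step is the one real weakness. As written you import purity from \cite{L}; this is not circular (Li proves it independently for the more general bipartite determinantal complexes, as the paper's own remark acknowledges), but it makes the argument rest on a substantially harder external result, and the inference ``shellable, hence pure'' needs care, since non-pure shellable complexes exist --- you really need the pure (or vertex-decomposable) form of Li's statement. If you want the proof to be self-contained, you must carry out the extension argument you sketch only in outline: given a face $E$ with an empty block, a gap in some staircase, or slack in the nesting or boundary inequalities, exhibit a vertex whose addition violates none of (a), (b), (c) against \emph{every} point of $E$. You correctly identify this as the delicate computational heart, but you do not do it --- and it is precisely the content of the second half of the paper's proof. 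So: correct modulo the citation to \cite{L}; incomplete if a self-contained proof is intended.
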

 
  \begin{proof}
  Let $F$ be a union of paths as described above.  To see that $F$ is a face of $\Delta$, we must show that for any two points  $P_1$ and $P_2$ of $F$, $P_1$ and $P_2$ do not satisfy conditions (a), (b), and (c) above.  Let $P_1 = (i_1, (k_1-1)n+j_1)$ and $P_2 = (i_2, (k_2-1)n+j_2)$ and assume $k_1 \leq k_2$.   If $k_1=k_2$, then $P_1$ and $P_2$ are on the same path, and by definition, we have $i_1 >i_2$ and $j_1 <j_2$ (or vice versa). Thus, $P_1$ and $P_2$ do not satisfy condition (a).  If $k_1 < k_2$ then $P_1$ is on the path $(g_{k_1-1}, (k_1-1)n+h_{k_1}) \longrightarrow (g_{k_1}, (k_1-1)n+h_{k_1-1})$ and so $g_{k_1-1} \geq i_1 \geq g_{k_1}$ and $ h_{k_1} \leq j_1 \leq h_{k_1-1}$.  Similarly $P_2$ is on the path  $(g_{k_2-1}, (k_2-1)n+h_{k_2}) \to (g_{k_2}, (k_2-1)n+h_{k_2-1})$ and so $g_{k_2-1} \geq i_2 \geq g_{k_2}$ and $ h_{k_2} \leq j_2 \leq h_{k_2-1}$.  By assumption $g_{k_1} \geq g_{k_2-1}$ and $h_{k_1} \geq h_{k_2-1}$, which implies that $i_1 \geq i_2$ and $j_1 \geq j_2$. Thus, $P_1$ and $P_2$ do not satisfy conditions (b) and (c). Therefore $F \in \Delta$. 
     
      Since the cardinality of $F$ is
      $\sum_{k=1}^{r}((g_{k-1}-g_k)+(h_{k-1}-h_k)+1)= g_0- g_r+h_0-h_r+r = m+n+r-2=\dim R/\ini(I_{mn}^r)$, by \cite[Corollary 6.2.2]{HH} it follows that $F$ is a facet. 
      
It remains to prove that no face of $\Delta$ that is not of this form is a facet, that is, that every $E\in\Delta$ is contained in a union of paths as described above. Let $E=\{P_1, \dots, P_t\} \in \Delta$ where for each $1\leq s\leq t$, $P_s = (i_s, (k_s-1)n+j_s)$ for some $1\leq i_s \leq m$, $ 1 \leq j_s \leq n$, and $1\leq k_s \leq r$.  We may assume $k_1 \leq \dots \leq k_t$. Since $E$ does not contain any diagonal elements, by the above conditions, for any pair of elements $P_s$ and $P_{s'}$ with $s<s'$ we have either $k_s <k_{s'}$, in which case $i_s \geq i_{s'}$ and $j_s \geq j_{s'}$, or $k_s = k_{s'}$, in which case either $i_s \leq i_{s'}$ and $j_s \geq j_{s'}$ or $i_s  \geq i_{s'}$ and $j_s \leq j_{s'}$.  Without loss of generality (reordering if necessary), we may assume that when $k_s = k_{s'}$, the latter is true.  Thus we have $k_1 \leq k_2 \leq \dots \leq k_t$, $i_1\geq i_2 \geq  \dots \geq  i_t$ and for any $s<s'$, $j_s \leq j_{s'}$ if $k_s = k_{s'}$ and $j_s \geq j_{s'}$ if $k_s < k_{s'}$.  In addition, since any facet has cardinality at most $m+n+r-2$, we have $t\leq m+n+r-2$. 

We form a partition of $E$ based on the values of $k_1 \dots, k_t$.  That is, for each $1 \leq \ell \leq t$ define $E_\ell = \{(i, (k-1)n+j) \in E \mid k = \ell\}$ (note that $E_\ell$ may be empty). Further, for each $1 \leq \ell \leq t$ for which $E_\ell$ is nonempty, define $\alpha_\ell$ to be the smallest index such that $P_{\alpha_\ell} \in E_\ell$ and define $\beta_\ell$ to be the largest index such that $P_{\beta_\ell} \in E_\ell$.  Thus $E_\ell = \{P_{\alpha_\ell}, P_{\alpha_\ell+1}, \dots, P_{\beta_\ell}\}$.  Note that $\alpha_1 = 1$ and $\beta_r = t$ and that for any $\ell$, $i_{\alpha_\ell} \geq i_{\alpha_\ell+1} \geq \dots \geq i_{\beta_\ell}$ and $j_{\alpha_\ell} \leq j_{\alpha_\ell +1}\leq \dots \leq j_{\beta_\ell}$.

Suppose all of the sets $E_1, \dots, E_r$ are nonempty.  Then we can extend $E$ to a facet of $\Delta$ as follows:

Since $m\geq i_1= i_{\alpha_1} \geq i_{\alpha_1+1} \geq \dots \geq i_{\beta_1}\geq i_{\alpha_2}$ and $j_{\alpha_1}\leq j_{\alpha_1+1}\leq \dots \leq j_{\beta_1}\leq n$, by \Cref{completingPaths} there is path from $(m, j_{\alpha_1})$ to $(i_{\alpha_2}, n)$ which contains all of the elements of $E_1$.  
  
 For each $2 \leq \ell \leq r-1$, since $i_{\alpha_\ell} \geq i_{\alpha_\ell+1} \geq \dots \geq i_{\beta_\ell} \geq i_{\alpha_{\ell+1}}$ and $j_{\alpha_\ell} \leq j_{\alpha_\ell+1} \leq \dots \leq j_{\beta_\ell} \leq j_{\alpha_{\ell-1}}$, by \Cref{completingPaths}, there is a path from $(i_{\alpha_\ell}, (\ell-1)n+j_{\alpha_\ell})$ to $(i_{\alpha_{\ell+1}}, (\ell-1)n+j_{\alpha_{\ell-1}})$ which contains the elements of $E_\ell$.

Finally, since $i_{\alpha_r} \geq i_{\alpha_r+1} \geq \dots \geq i_{\beta_r} \geq  1$ and $1\leq j_{\alpha_{r}} \leq j_{\alpha_{r}+1} \leq \dots \leq j_{\beta_r} \leq j_{\alpha_{r-1}}$, by \Cref{completingPaths} there is a path from $(i_{\alpha_r}, 1)$ and $(1, (r-1)n+j_{\alpha_{r-1}})$ which contains the elements of $E_r$.  

This union of paths is a facet of the desired form which contains $E$.

Now suppose that some of the $E_1, \dots, E_r$ are empty.  We form the paths as described above for all non-empty sets $E_\ell$, skipping over any empty sets.  Then, for each empty $E_\ell$, we add a single point to $E_\ell$ as follows.  Let $a$ be the smallest index such that $E_a \neq \emptyset$ and let $b$ be the largest such index.  Suppose $E_\ell = \emptyset$.  If $\ell <a$ then we add the point $(m,n)$ to $E_\ell$.  If $\ell >b$ we add $(1,1)$ to $E_\ell$.  If $a < \ell <b$ then let $c$ be the largest index such that $c<\ell$ and $E_c \neq \emptyset$ and let $d$ be the smallest index such that $\ell <d$ and $E_d \neq \emptyset$.  Then we add the point $(i_{\alpha_d}, j_{\alpha_c})$ to $E_\ell$.  Again, the union of the paths and the isolated points is a facet of the desired form.  
\end{proof}

\begin{remark}
\begin{enumerate}
   \item   Every $F\in\mathcal F(\Delta)$ is the union of $r$ paths, and each one in the subregion of $\mathcal{P}$ corresponding to one of the  matrices $X_1,\ldots, X_r$.
   \item The complex  $\Delta$ is pure, since $|F|=m+n+r-2$ for every $F\in\mathcal{F}(\Delta)$. This has been proved in a more general case in \cite{L}, where $\Delta$ is also proved to be shellable, but no explicit formulas for the Hilbert series and for the multiplicity are given.
\end{enumerate}
\end{remark}

\begin{example}
\label{ex:complete-faces}
Let $m=4$, $n=5$, and $r=3$, and let 
$$E=\{(4,5),(3,7),(2,8),(2,11),(1,12)\}\in\Delta.$$
The points in $E$ are the ones with red circles in \Cref{fig: facet extension}, where the points are represented  not only in $\mathcal{P}$ but also in the vertical region. This is done so that the reader can check that the facet we will eventually obtain creates a diagonal neither in the horizontal, nor in the vertical concatenation, and that no further points can be added to the facet. Following the proof of \Cref{facets}, we extend $E$ to a facet $F\in\mathcal{F}(\Delta)$. Since we have some elements of $E$ in each matrix, we start adding elements to the first matrix. The element $(4,5)$ matches with the initial element of the first path described as $(m,h_1)$ above, hence we set $(4,5)$ as the initial element of the first path. From here we know $h_1=5$. Next, we set the initial element of the second path as $(3,7)$ given in the second matrix. Then we know $g_1=3$ and $h_2=2$. Hence, we add the element $(3,5)$ as the last element of the first path that matches with $(g_1,n)$. Next, we set the initial element of the last matrix as $(2,11)$. So we know that $g_2=2$ and $h_3=1$. Hence, we add the element $(2,10)$ as the last element of the second path that matches with $(g_2,(2-1)5+h_1)$. By adding elements $(2,7), (2,9)$, we complete the second path. Finally, since we know the last element of the last path should match with $(g_3,(3-1)5+h_2)$, we set the last element as $(1,12)$ given in $E$. Thus, $g_3=1$. By adding the element $(1,11)$, we complete the last path. By the \Cref{prop: CM}, we know the cardinality of any facet is $m+n+r-2=10$ and since $|F|=10$, it follows that $F$ is a facet. This can be seen also by noting that $F$ is the union of three paths: $(4,5)\longrightarrow (3,5)$, $(3,7)\longrightarrow (2,10)$, and $(2,11)\longrightarrow (1,12)$.
\begin{figure}[h]
    \begin{tikzpicture}[scale=0.9]
     \foreach \x in {1,...,15}
     \foreach \y in {1,...,4}
     {
     \filldraw[black] (\x,\y) circle (2pt);
     }
 \foreach \x in {1,...,5}
     \foreach \y in {-7,...,1}
     {
     \filldraw[black] (\x,\y) circle (2pt);
     }
\draw (5.5,4.5)--(5.5,-7.5);
\draw (10.5,4.5)--(10.5,0.5);
\draw (0.5,0.5)--(15.5,0.5);
\draw (0.5,-3.5)--(5.5,-3.5);
        \draw[red] (5,1) circle (6pt);
        \draw[blue] (5,2)circle (6pt);
        \draw[red] (7,2) circle (6pt);
        \draw[blue] (7,3) circle (6pt);        
        \draw[red] (8,3) circle (6pt);
         \draw[blue] (9,3) circle (6pt);
        \draw[blue] (10,3) circle (6pt);
        \draw[red] (11,3) circle (6pt);
         \draw[blue] (11,4) circle (6pt);
         \draw[red] (12,4) circle (6pt);
         \draw[red] (3,-1) circle (6pt);
          \draw[blue] (2,-1) circle (6pt);
           \draw[blue] (4,-1) circle (6pt);
            \draw[blue] (5,-1) circle (6pt);
         \draw[red] (2,-2) circle (6pt);
          \draw[blue] (1,-4) circle (6pt);
         \draw[red] (2,-4) circle (6pt);
         \draw[red] (1,-5) circle (6pt);
         \node[thin,above] at (12,4.5) {$_{h_2=2}$};
          \node[thin,above] at (11,4.5) {$_{h_3=1}$};
          \node[thin,above] at (10,4.5) {$_{h_1=5}$};
          \node[thin,above] at (7,4.5) {$_{h_2=2}$};
          \node[thin,above] at (5,4.5) {$_{h_1=h_0=5}$};
          \node[thin,left] at (0,1) {$_{g_0=4}$};
           \node[thin,left] at (0,2) {$_{g_1=3}$};
           \node[thin,left] at (0,3) {$_{g_2=2}$};
           \node[thin,left] at (0,4) {$_{g_3=1}$};
    \end{tikzpicture}
      
      \caption{Extension to a facet}
      \label{fig: facet extension}
  \end{figure}
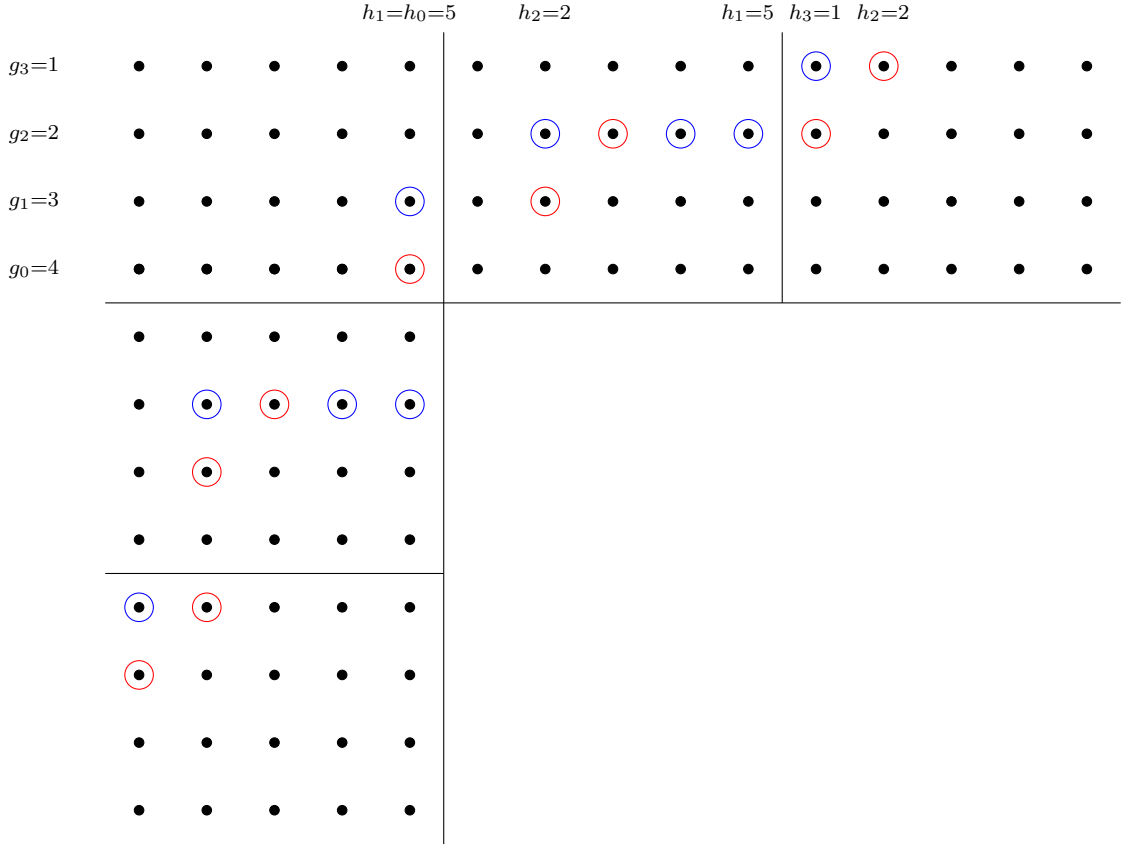
\end{example}

Note that if one considers a Hibi order $\tau$, then the simplicial complex associated to $K[L]$, where $L=\J(P)$, is known and very easily seen to be the order complex of $L$, that is, the simplicial complex of all the chains of $L$. In our case $L=L_{mnr}=\J(P_{mnr})$, and we have seen in the proof of \Cref{thm: invariants}(3) that the facets of this complex (which are the maximal chains of $L_{mnr}$) are in bijection with $S(M_{mnr})$, where $M_{mnr}=\{M^{m-1}, N^{n-1}, R^{r-1}\}$. This implies that $|\F(\Delta)|=|S(M_{mnr})|$, hence there is a set-theoretic bijection between the elements of $\F(\Delta)$ and words on this multiset $M_{mnr}$. This leads to the following question: how can one naturally encode the facets of $\Delta$ with words in $M_{mnr}$? 

\begin{proposition}
\label{NMR}
There is a bijection between $\mathcal{F}(\Delta)$ and the set of the words on the multiset $\{M^{m-1}, N^{n-1}, R^{r-1}\}$.
\end{proposition}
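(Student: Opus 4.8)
The plan is to construct an explicit bijection $\Phi\colon \F(\Delta)\to S(M_{mnr})$, where $M_{mnr}=\{M^{m-1},N^{n-1},R^{r-1}\}$, and to write down its inverse. By \Cref{facets}, every facet $F$ is canonically a union $F=P_1\cup\cdots\cup P_r$, where $P_k$ is the monotone path lying in the block of columns $(k-1)n+1,\ldots,kn$ of $\mathcal P$ corresponding to $X_k$; concretely $P_k$ runs from $(g_{k-1},(k-1)n+h_k)$ to $(g_k,(k-1)n+h_{k-1})$. First I would observe that this decomposition is genuinely canonical: since the $r$ column-blocks are disjoint and cover all of $\mathcal P$, the path $P_k$ is recovered from $F$ simply as $F$ intersected with the columns of $X_k$, and then $g_{k-1},g_k$ are its largest and smallest row-indices while $h_{k-1},h_k$ are its largest and smallest local column-indices. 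Thus a facet is precisely the datum of the two chains $m=g_0\ge\cdots\ge g_r=1$ and $n=h_0\ge\cdots\ge h_r=1$ together with, for each $k$, a choice of monotone path with the prescribed endpoints.

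To define $\Phi$, I would traverse each $P_k$ from its starting point $(g_{k-1},(k-1)n+h_k)$ to its ending point $(g_k,(k-1)n+h_{k-1})$, recording the letter $M$ for every up-step (a step of type $(-1,0)$, of which there are $g_{k-1}-g_k$) and the letter $N$ for every right-step (type $(0,1)$, of which there are $h_{k-1}-h_k$); call the resulting word $w_k$. I then set
\[
\Phi(F)=w_1\,R\,w_2\,R\,\cdots\,R\,w_r,
\]
inserting one separating letter $R$ after each of the first $r-1$ paths. Summing over $k$ yields exactly $\sum_k(g_{k-1}-g_k)=m-1$ letters $M$, $\sum_k(h_{k-1}-h_k)=n-1$ letters $N$, and $r-1$ letters $R$, so $\Phi(F)\in S(M_{mnr})$.

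For the inverse I would, given $w\in S(M_{mnr})$, split $w$ at its $r-1$ occurrences of $R$ into blocks $w_1,\ldots,w_r$ (some possibly empty), and set $g_0=m$, $h_0=n$ and recursively $g_k=g_{k-1}-(\text{number of }M\text{'s in }w_k)$ and $h_k=h_{k-1}-(\text{number of }N\text{'s in }w_k)$. Because $w$ contains $m-1$ letters $M$ and $n-1$ letters $N$ in total, these are weakly decreasing chains with $g_r=1$ and $h_r=1$, hence admissible in the sense of \Cref{facets}. I then build $P_k$ by starting at $(g_{k-1},(k-1)n+h_k)$ and reading $w_k$ left to right, taking an up-step for each $M$ and a right-step for each $N$; this path ends exactly at $(g_k,(k-1)n+h_{k-1})$, so $P_1\cup\cdots\cup P_r$ is a facet by \Cref{facets} (an empty $w_k$ giving a singleton path, as in the convention fixed before the theorem). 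It is then routine to verify that the two assignments are mutually inverse: traversing a path and re-reading its word returns the same shape, and the cumulative $M$- and $N$-counts recover the same $g_\bullet,h_\bullet$; this establishes the bijection.

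The only genuinely delicate point is the bookkeeping of column offsets: one must keep track that the starting local column of $P_k$ is $h_k$ while its ending local column is $h_{k-1}$, and that the $r-1$ separating letters $R$ sit precisely at the transitions between consecutive matrices, so that decoding recovers the blocks unambiguously. As a sanity check one can match this against \Cref{ex:complete-faces}, where $g_\bullet=(4,3,2,1)$ and $h_\bullet=(5,5,2,1)$ give $w_1=M$, a block $w_2$ shuffling one $M$ and three $N$'s, and a block $w_3$ shuffling one $M$ and one $N$, consistent with $M_{452}=\{M^{3},N^{4},R^{2}\}$. I would also note that, composed with the encoding of the maximal chains of $L_{mnr}$ by the same words in the proof of \Cref{thm: invariants}(3), this $\Phi$ realizes the abstract equality $|\F(\Delta)|=|S(M_{mnr})|$ through a concrete correspondence.
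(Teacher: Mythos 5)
Your proposal is correct and follows essentially the same route as the paper: decompose a facet into its $r$ monotone paths (one per column-block), encode up-steps as $M$, right-steps as $N$, separate consecutive paths by $R$, and invert by splitting the word at the $R$'s and reconstructing the $g_k,h_k$ from the letter counts (your recursive definition of $g_k,h_k$ is equivalent to the paper's ``count the $M$'s, resp.\ $N$'s, after the $k$th $R$''). The only blemish is the typo $M_{452}$ for $M_{453}$ in your sanity check.
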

\begin{proof}
First note that, in a facet, any path $\{(g_1,h_1), \ldots, (g_t, h_t) \}$ from $(g_1, h_1)$ to $(g_t, h_t)$ is inside the subregion of $P$ corresponding to a matrix $X_i$ for a certain $i$. 
Moreover it can be written as a unique word on the alphabet $\{M, N\}$ by identifying the step from $(g_s, h_s)$ to $(g_{s+1}, h_{s+1})$ with $N$ if $(g_{s+1}, h_{s+1}) - (g_s, h_s) =  (0,1)$ and with $M$ if $(g_{s+1}, h_{s+1}) - (g_s, h_s) =  (-1,0)$. 

We can thus identify a union of paths as defined in the theorem with a word on $\{M^{m-1}, N^{n-1}, R^{r-1}\}$ by adding an $R$ between paths. 

To see that this process is reversible, suppose we begin with a word $w$ on the multiset $\{M^{m-1}, N^{n-1}, R^{r-1}\}$.  Define subwords $w_1, \dots, w_r$ on the alphabet $\{M,N\}$ so that $w_1$ is the word before the first occurrence of $R$ in $w$,  $w_k$ is the word between the $(k-1)$st and $k$th occurrences of $R$ in $w$ and $w_r$ is the word after the last occurrence of $R$ in $w$.  Note that $w_k$ may be empty for any $k$.  

For each $1 \leq k \leq r-1$, define 
\[
h_k = 1+(\mbox{the number of $N$'s which occur after the $k$th $R$ in $w$}).
\]
Similarly define 
\[
g_k = 1 + (\mbox{the number of $M$'s which occur after the $k$th $R$ in $w$)}.
\]
 Then $n=h_0\geq h_1\geq h_2\geq\ldots \geq h_{r-1}\geq h_r= 1$ and $m=g_0\geq g_1\geq g_2\geq \ldots \geq g_{r-1}\geq g_r= 1$. Further, the word $w_k$ defines a unique path $(g_{k-1}, (k-1)n+h_{k}) \longrightarrow (g_{k},(k-1)n+h_{k-1})$ for $1\leq k \leq r$.  

\end{proof}

\begin{example}
Let $m=4$, $n=5$, and $r=3$. Then by the proposition above the facets of $\Delta$ are in bijection with the words in $\{M^{3},N^{4}, R^{2}\}$. 
Consider the facet $F$ of \Cref{ex:complete-faces}, that is the union of three paths  $(4,5)\longrightarrow (3,5)$, $(3,7)\longrightarrow (2,10)$, and $(2,11)\longrightarrow (1,12)$ as in Figure 1. We construct the word associated to $F$ following the proof of the proposition. 
The first path contains $(4,5)$ and $(3,5)$, and this gives $(3,5)-(4,5)=(-1,0)$, hence the first path translates as $M$.\\
The second path contains $(3,7)$, $(2,7)$, $(2,8)$, $(2,9)$, $(2,10)$, and this gives $(2,7)-(3,7)=(-1,0)$, $(2,8)-(2,7)=(0,1)$, $(2,9)-(2,8)=(0,1)$, $(2,10)-(2,9)=(0,1)$, hence the second path translates to $MNNN$.\\
The last path contains $(2,11)$, $(1,11)$, $(1,12)$, and this gives $(1,11)-(2,11)=(-1,0)$, $(1,12)-(1,11)=(0,1)$, hence the last path translates as $MN$.\\
By adding the three $R$'s in between, we have the code word $MRMNNNRMN$.\\

We also describe the opposite procedure: given the word $MRMNNNRMN$, one finds the corresponding facet (that we already know to be $F$). Since $R$ stands between matrices in the first one there are $2$ points, in the second one  $5$ points, and in the last one $3$ points.
We want to construct the paths that constitute the facet. 
By the proof of \Cref{NMR} one has $$h_0=5,\quad h_1=5,\quad h_2=2, \quad h_3=1$$
and 
$$g_0=4,\quad g_1=3,\quad g_2=2, \quad g_3=1.$$

By \Cref{facets} our facet is a union of paths of the  form

\[
  	(g_{k-1}, (k-1)n+h_{k}) \longrightarrow (g_{k},(k-1)n+h_{k-1}) \mbox{ for $1\leq k\leq r$.}
\]  
In our case $r=3$ and $n=5$, and we have computed all the values of $h_i$ and $g_i$.

For $k=1$ we get $(g_0, 0\cdot5+h_1)\longrightarrow (g_1, 0\cdot5+h_0)=(4,5)\longrightarrow (3,5)$.

For $k=2$ we get $(g_1, 1\cdot5+h_2)\longrightarrow (g_2, 1\cdot5+h_1)=(3,7)\longrightarrow (2,10)$.

For $k=3$ we get $(g_2, 2\cdot5+h_3)\longrightarrow (g_3, 2\cdot5+h_2)=(2,11)\longrightarrow (1,12)$.

Thus $F$ is the union of the paths $(4,5)\longrightarrow (3,5)$, $(3,7)\longrightarrow (2,10)$, and $(2,11)\longrightarrow (1,12)$. Next, to find all the points, we use the steps. For instance, since we have $M$ after the first $R$, and we know the initial point of the second path which is $(3,7)$, we can find the next point $P=(g_k,h_k)$. The difference $(g_k,h_k)-(3,7)=(-1,0)$ hence we have $P=(2,7)$. Proceeding this way, one can find all the points in the facet.
\end{example}

In the following example, we construct the set $\mathcal{F}(\Delta)$ for fixed values of $m,n,r$.
\begin{example}
Let $m=2$, $n=2$, $r=3$. Then $|\mathcal{F}(\Delta)|=12$ (see proof of \Cref{thm: invariants}) let $\mathcal{F}(\Delta)=\{F_1,\ldots,F_{12}\}$. According to \Cref{facets}, every facet is the union of three paths, associated to the integers $ 2=g_0\geq g_1 \geq g_2\geq g_3=1$ and $ 2=h_0\geq h_1 \geq h_2\geq h_3=1$. Note that two different facets can be obtained with the same $h_k$'s and $g_k$'s.

We give the list of all the facets and we represent them in $P$ (see Figure 2), with also the associated code words obtained as in the proof of \Cref{NMR}. 
\begin{itemize}
    \item[{$F_1,F_2$ :}]  $g_1=1$, $g_2=1$, $h_1=1$, $h_2=1$\\
    $(2,1)\rightarrow (1,2)$, $(1,3)\rightarrow (1,3)$, $(1,5)\rightarrow (1,5)$ 
    \smallskip
    
    \item[{$F_3$} :] $g_1=2$, $g_2=1$, $h_1=1$, $h_2=1$ \\
     $(2,1)\rightarrow (2,2)$, $(2,3)\rightarrow (1,3)$, $(1,5)\rightarrow (1,5)$
      \smallskip
      
     \item[{$F_4$} :] $g_1=2$, $g_2=2$, $h_1=1$, $h_2=1$\\
      $(2,1)\rightarrow (2,2)$, $(2,3)\rightarrow (2,3)$,  $(2,5)\rightarrow (1,5)$
       \smallskip
       
     \item[{$F_5$} :] $g_1=2$, $g_2=2$, $h_1=2$, $h_2=1$\\
      $(2,2)\rightarrow (2,2)$,  $(2,3)\rightarrow (2,4)$,  $(2,5)\rightarrow (1,5)$ 
       \smallskip

     \item[{$F_6,F_7$} :] $g_1=2$, $g_2=1$, $h_1=2$, $h_2=1$\\
       $(2,2)\rightarrow (2,2)$,  $(2,3)\rightarrow (1,4)$,  $(1,5)\rightarrow (1,5)$
        \smallskip
        
      \item[{$F_8$} :] $g_1=1$, $g_2=1$, $h_1=2$, $h_2=1$\\
       $(2,2)\rightarrow (1,2)$,  $(1,3)\rightarrow (1,4)$,  $(1,5)\rightarrow (1,5)$
        \smallskip
        
      \item[{$F_9$} :] $g_1=1$, $g_2=1$, $h_1=2$, $h_2=2$\\
        $(2,2)\rightarrow (1,2)$,  $(1,4)\rightarrow (1,4)$,  $(1,5)\rightarrow (1,6)$
         \smallskip
         
       \item[{$F_{10}$} :] $g_1=2$, $g_2=1$, $h_1=2$, $h_2=2$\\
       $(2,2)\rightarrow (2,2)$,  $(2,4)\rightarrow (1,4)$,  $(1,5)\rightarrow (1,6)$
        \smallskip
        
       \item[{$F_{11},F_{12}$} :] $g_1=2$, $g_2=2$, $h_1=2$, $h_2=2$\\
     $(2,2)\rightarrow (2,2)$,  $(2,4)\rightarrow (2,4)$, $(2,5)\rightarrow (1,6)$
 \end{itemize}

    \begin{figure}[H]
        \centering
        \includegraphics[width=1\linewidth]{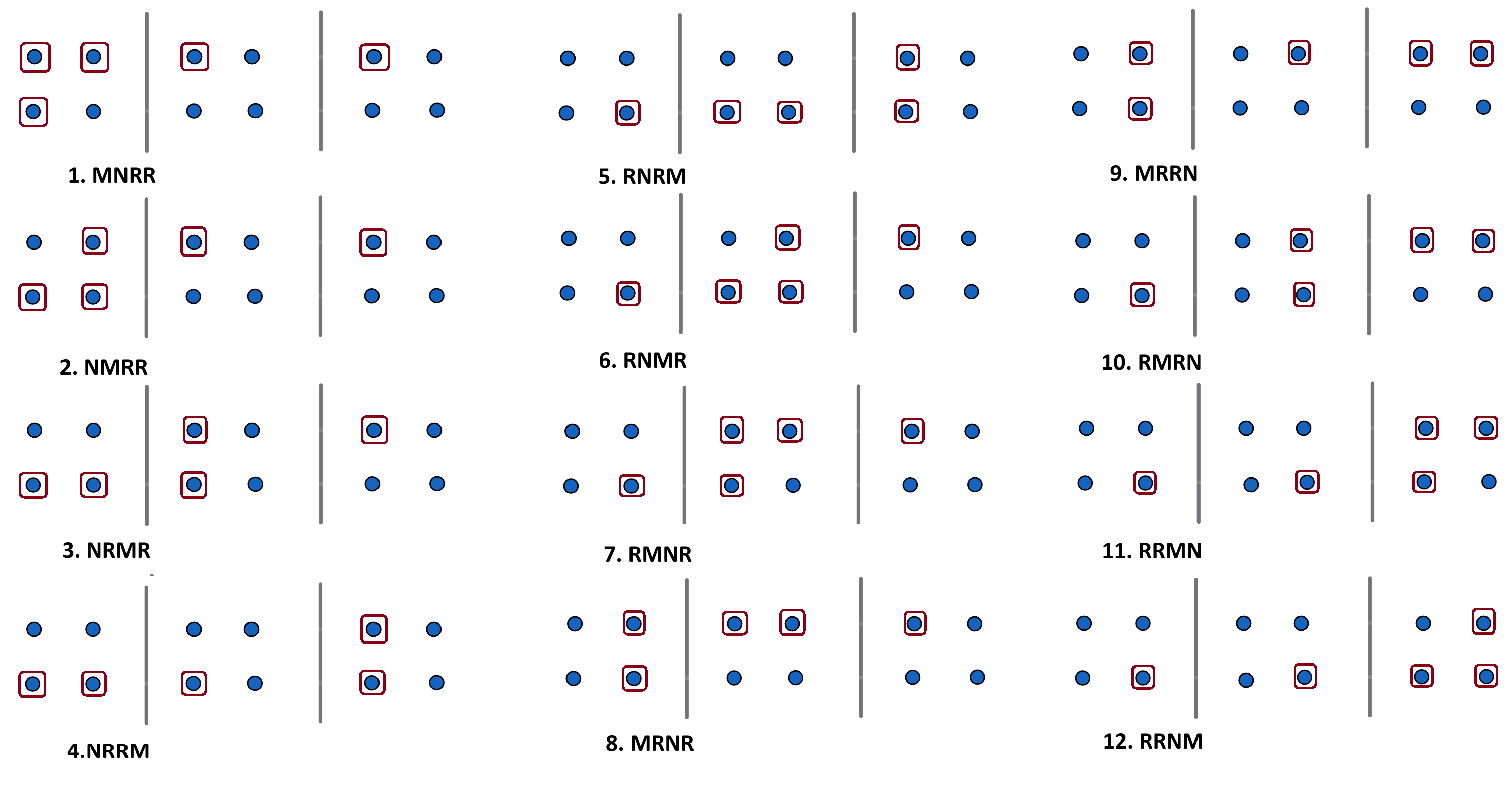}
        \caption{All the facets of $\Delta$ for $m=2$, $n=2$, $r=3$.}
        \label{fig:enter-label}
    \end{figure}
\end{example}

Note that the the facets in this example have been ordered that so that $F_{\ell}$ differs from $F_{\ell+1}$ by exchanging exactly one pair of adjacent letters in the corresponding word.  In this example this is a shelling order of the simplicial complex. This leads us the to the following two questions for further work.

\begin{question}
Does any order of the facets of $\Delta$ in which subsequent facets differ by an exchange of adjacent letters in the corresponding word constitute a shelling order?
\end{question}

\begin{question}
Can the formulation of the facets of $\Delta$ as words on an alphabet be extended to the case of larger minors, and if so is there a similar shelling order?
\end{question}

 \subsection*{Acknowledgments} 
 
 Our work started at the 2024 workshop ``Women in Commutative Algebra III'' hosted by Casa Matem\'atica Oaxaca. We thank the organizers of this workshop for bringing our team together. We acknowledge the excellent working conditions provided by CMO-BIRS.

 The second author was partially supported by the MIUR Excellence Department Project CUP D33C23001110001, and by INdAM-GNSAGA. 

 The third author was supported by a fellowship from the Wenner-Gren Foundations (grant WGF2022-0052). She would also like to thank Fabian Burghart and Per Alexandersson for helping her figure out some of the combinatorics in this paper.

  The fourth author was partially supported by the Scientific and Technological Research Council of Turkey - TÜBİTAK (Grant No: 122F128) and WICA III.

The fifth author would like to thank the SERB ITS scheme (File No. ITS/2024/001271) for providing the travel support to participate in WICAIII.

\end{document}